\documentclass[a4,12pt]{amsart}
\oddsidemargin 0mm
\evensidemargin 0mm
\topmargin 0mm
\textwidth 160mm
\textheight 230mm
\tolerance=9999
\usepackage{amssymb,amstext,amsmath,amscd,amsthm,amsfonts,enumerate,latexsym}
\usepackage{color}
\usepackage{graphicx}
\usepackage[all]{xy}
\theoremstyle{plain}
\newtheorem{thm}{Theorem}[section]

\newtheorem*{thm*}{Theorem}
\newtheorem*{cor*}{Corollary}

\newtheorem{prop}[thm]{Proposition}
\newtheorem{proposition}[thm]{Proposition}

\newtheorem{lem}[thm]{Lemma}
\newtheorem{cor}[thm]{Corollary}

\newtheorem*{claim*}{Claim}

\theoremstyle{definition}

\newtheorem{ex}[thm]{Example}

\newtheorem{rem}[thm]{Remark}
\newtheorem{remark}[thm]{Remark}

\newtheorem{ques}[thm]{Question}

\theoremstyle{remark}

\numberwithin{equation}{thm}


\def\Z{\mathbb{Z}}

\def\Ker{\operatorname{Ker}}

\def\m{\mathfrak m}

\def\N{\Bbb N}

\newcommand{\Ap}{\mathrm{Ap}}

\newcommand{\rmF}{\mathrm{F}}
\newcommand{\rmI}{\mathrm{I}}

\def\gr{\mbox{\rm gr}}

\title[Frobenius numbers and the first Hilbert coefficients]{Frobenius numbers and the first Hilbert coefficients of certain numerical semigroup rings}

\author{Do Van Kien}
\address{Department of Mathematics, Hanoi Pedagogical University 2, 32 Nguyen Van Linh street, Xuan Hoa, Phu Tho, Vietnam}
\email{dovankien@hpu2.edu.vn}

\author{Pham Hung Quy}
\address{Department of Mathematics, FPT University, Hanoi, Vietnam}
\email{quyph@fe.edu.vn}
\thanks{2020 {\em Mathematics Subject Classification.} 20M25, 11D07, 13A15, 13D40.}
\thanks{{\em Key words and phrases.} Frobenius number, Numerical semigroup, Numerical semigroup ring, Defining ideal, Hilbert coefficient.}
\begin{document}
\maketitle
\begin{abstract}
Let $a,b$ be positive integers. In this note, we study the numerical semigroup $H=\left<a,a+1,b\right>$ and and the associated numerical semigroup ring $R=k[[H]]$. Under the certain conditions, we provide explicit formulas for  the Frobenius number of $H$ and for the first Hilbert coefficient of $R$.
\end{abstract}
\section{Introduction}\label{section1}
Throughout this note, let $n_1,n_2,\ldots,n_e$ be positive integers with $\gcd(n_1,n_2,\ldots,n_e)=1$. Let $$H=\left<n_1,n_2,\ldots,n_e\right>=\{c_1n_1+c_2n_2+\cdots+c_en_e\mid c_i\in\Z_{\ge 0},\, \forall i=1,2,\ldots,e\}$$ be a numerical semigroup. We always assume $n_1=\min\{n_1,n_2,\ldots,n_e\}$ and that $H$ is minimally generated by $\{n_i\}_{i=1}^e$. We call $n_1$ and $e$ the {\it multiplicity} and the {\it embedding dimension} of $H$, respectively. Since  $\gcd(n_1,n_2,\ldots,n_e)=1$, the complement $\N\setminus H$ is finite.  The largest number of  $\N\setminus H$ is said to be the {\it Frobenius number} of $H$ and is denoted by $\rmF(H)$. We also define the {\it genus} of $H$ by $g(H)=|\N\setminus H|$. The problem of determining explicit formulas for $\rmF(H)$ in terms of the generators
$n_i$ is a classical and important problem in number theory.  When the embedding dimension $e=2$, it is well known that $\rmF(H)=n_1n_2-n_1-n_2$. However, for $e\ge 3$ the problem remains open in general.

The first purpose of this article is to give the explicit formula for the Frobenius number of $H$ when $H$ is minimally generated by $\{a,a+1,a+d\}$.

One important approach to studying the combinatorial properties of $H$ is through its numerical semigroup ring. Let $R=k[[H]]=k[[t^{n_1},t^{n_2},\ldots,t^{n_e}]]\subseteq k[[t]]$ is the numerical semigroup ring associated to $H$ over a field $k$. This is a noetherian local domain with maximal ideal $\m=(t^{n_1},t^{n_2},\ldots,t^{n_e})$. Since $R$ is a one-dimensional Cohen-Macaulay local ring, there are integers $e_0(R)$ and $e_1(R)$ such that
$$\ell_R(R/\m^{n+1})=e_0(R){n+1\choose 1}-e_1(R)$$
for all sufficiently large integers $n$.
We call $e_0(R), e_1(R)$ the {\it multiplicity} and the {\it first Hilbert coefficient} of $R$, respectively. While it is well-known that $e_0(R)=n_1$, there is currently no general formula for computing $e_1(R)$ in terms of generators of $H$. This naturally leads to the following questions.
\begin{ques}\label{Chern}
	Find an explicit formula for the first Hilbert coefficient of $R$?
\end{ques}
In \cite{K75}, Kirby established bounds for $e_1(R)$. He proved that 
\begin{equation*}
	n_1-1\le e_1(R)\le \frac{1}{2}n_1(n_1-1).
\end{equation*}

The second purpose of this article is to give an explicit formula for the first Hilbert coefficient of the numerical semigroup ring associated to $H=\left<a,a+1,a+d\right>$. For the two main purposes of this article, let us set $n_1<n_2<n_3$ be positive integers such that $\gcd(n_1,n_2,n_3)=1$. Let $$H=\left<n_1,n_2,n_3\right>:=\{c_1n_1+c_2n_2+c_3n_3\mid c_1,c_2,c_3\in \N\}$$ be a numerical semigroup. Let $R=k[[H]]$ is the numerical semigroup ring associated to $H$ and $S = k[[x,y,z]]$ be the formal series ring. We consider the   $k$-algebra homomorphism $\varphi :S \to R$ is given by  $\varphi(x)=t^{n_1}, \varphi(y)=t^{n_2}, \varphi(z)=t^{n_3}$. Let $I= \Ker\varphi $ and call it the {\it defining ideal} of $R$. It is well known that if $H$ is not symmetric, then the ideal $I$ is minimally generated by 
\begin{equation*}
	\tag{$\sharp$} x^{\alpha+\alpha'}-y^{\beta'}z^\gamma, y^{\beta+\beta'}-x^{\alpha}z^{\gamma'},z^{\gamma+\gamma'}-x^{\alpha'}y^{\beta}
\end{equation*}
for some positive integers $\alpha, \beta, \gamma, \alpha', \beta', \gamma'$(cf. \cite{He70}). 

In \cite{NNW12}, the authors gave the formula for the Frobenius number of $H$ in terms of the parameters $\alpha, \beta, \gamma, \alpha', \beta', \gamma'$.
\begin{prop}[see \cite{NNW12}]\label{Fro}{\,}
\begin{enumerate}
		\item[1)] If $\beta'n_2>\alpha n_1$ then $\rmF(H)=\beta' n_2+(\gamma+\gamma')n_3-(n_1+n_2+n_3)$.
		\item[2)] If $\beta'n_2<\alpha n_1$ then $\rmF(H)=\alpha n_1+(\gamma+\gamma')n_3-(n_1+n_2+n_3)$.
	\end{enumerate} 
\end{prop}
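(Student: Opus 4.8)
The plan is to compute $\rmF(H)$ through the Apéry set $\Ap(H,n_1)=\{h\in H : h-n_1\notin H\}$, using the classical facts that $\Ap(H,n_1)$ meets each residue class modulo $n_1$ exactly once (so $|\Ap(H,n_1)|=n_1$) and that $\rmF(H)=\max\Ap(H,n_1)-n_1$. The entire problem is then to locate $\max\Ap(H,n_1)$, and for this I would describe $\Ap(H,n_1)$ explicitly in terms of the exponents in $(\sharp)$ (which are defined because $H$ is assumed non-symmetric, so that all of $\alpha,\beta,\gamma,\alpha',\beta',\gamma'$ are positive). Applying $\varphi$ to the three binomials in $(\sharp)$ records the semigroup relations
\begin{align*}
(\alpha+\alpha')n_1&=\beta'n_2+\gamma n_3,\\
(\beta+\beta')n_2&=\alpha n_1+\gamma'n_3,\\
(\gamma+\gamma')n_3&=\alpha'n_1+\beta n_2.
\end{align*}

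First I would pass to the Artinian reduction. Since $R\cong S/I$ and $\varphi(x)=t^{n_1}$, the third isomorphism theorem gives $R/t^{n_1}R\cong S/(I+xS)$; reducing modulo $x$, the generators $x^{\alpha+\alpha'}-y^{\beta'}z^{\gamma}$, $y^{\beta+\beta'}-x^{\alpha}z^{\gamma'}$, $z^{\gamma+\gamma'}-x^{\alpha'}y^{\beta}$ become $-y^{\beta'}z^{\gamma}$, $y^{\beta+\beta'}$, $z^{\gamma+\gamma'}$ (using $\alpha,\alpha'\ge 1$), so
$$R/t^{n_1}R\;\cong\;k[[y,z]]\big/\bigl(y^{\beta+\beta'},\,z^{\gamma+\gamma'},\,y^{\beta'}z^{\gamma}\bigr).$$
As this is a monomial quotient, the residues of the monomials $y^iz^j$ with $(i,j)$ in the staircase region
$$\Delta=\{(i,j)\in\N^2 : i<\beta',\ j<\gamma+\gamma'\}\ \cup\ \{(i,j)\in\N^2 : i<\beta+\beta',\ j<\gamma\}$$
form a $k$-basis; in particular $|\Delta|=\dim_k R/t^{n_1}R=n_1$, the last equality coming from the vector space decomposition $R/t^{n_1}R\cong\bigoplus_{h\in\Ap(H,n_1)}k\,t^{h}$.

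Next I would prove that $\theta\colon(i,j)\mapsto in_2+jn_3$ restricts to a bijection $\Delta\to\Ap(H,n_1)$. For $\theta(\Delta)\subseteq\Ap(H,n_1)$: if $in_2+jn_3=n_1+h$ with $h\in H$ and $(i,j)\in\Delta$, then $y^iz^j$ is congruent modulo $I$ to a monomial lying in $xS$, hence its residue in $k[[y,z]]/(y^{\beta+\beta'},z^{\gamma+\gamma'},y^{\beta'}z^{\gamma})$ is $0$, contradicting $(i,j)\in\Delta$. For injectivity: if $\theta(i,j)=\theta(i',j')$ with $(i,j)\ne(i',j')$ in $\Delta$, then after cancelling one gets $y^{p}-z^{q}\in I$ with $1\le p\le\beta+\beta'-1$ and $q\ge 1$, so $\overline{y}^{\,p}=\overline{z}^{\,q}$ in the monomial quotient; being distinct monomials, $\overline{y}^{\,p}=\overline{z}^{\,q}=0$, i.e.\ $y^{p}\in(y^{\beta+\beta'},z^{\gamma+\gamma'},y^{\beta'}z^{\gamma})$, impossible since $p<\beta+\beta'$. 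As $|\Delta|=n_1=|\Ap(H,n_1)|$, $\theta$ is a bijection. Finally I would maximize $in_2+jn_3$ over $\Delta$: with $n_2,n_3>0$ the maximum is attained at one of the two outer corners $(\beta'-1,\gamma+\gamma'-1)$ and $(\beta+\beta'-1,\gamma-1)$, whose values differ by $\gamma'n_3-\beta n_2$, which by the second relation equals $\beta'n_2-\alpha n_1$. Subtracting $n_1$ and substituting the appropriate relation turns the first corner's value into $\beta'n_2+(\gamma+\gamma')n_3-(n_1+n_2+n_3)$ and the second's into $\alpha n_1+(\gamma+\gamma')n_3-(n_1+n_2+n_3)$, giving exactly the stated dichotomy.

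I expect the crux to be the bijection $\Delta\cong\Ap(H,n_1)$: one must check that the monomial basis of the Artinian reduction matches the Apéry set term by term and not merely in cardinality, and it is precisely the non-symmetry of $H$ — which forces the shape of $(\sharp)$ and the positivity of the six exponents — that makes this identification go through. Once it is in place, the rest is a finite optimization over the staircase together with bookkeeping among the three semigroup relations.
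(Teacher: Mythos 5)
Your proof is correct and complete: the identification $R/t^{n_1}R\cong k[[y,z]]/(y^{\beta+\beta'},z^{\gamma+\gamma'},y^{\beta'}z^{\gamma})$, the bijection $(i,j)\mapsto in_2+jn_3$ from the staircase onto $\Ap(H,n_1)$ (injectivity plus equal cardinality $n_1$), and the comparison of the two outer corners via the relation $(\beta+\beta')n_2=\alpha n_1+\gamma'n_3$ all check out, and they yield exactly the stated dichotomy. The paper itself gives no proof — it quotes this result from \cite{NNW12} — so there is nothing internal to compare against; your argument is essentially the standard one underlying that reference, namely the description of the Ap\'ery set of $n_1$ as $\{in_2+jn_3\}$ over the staircase determined by the Herzog matrix, followed by a two-corner maximization.
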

However, it is very difficult to compute six parameters $\alpha, \beta, \gamma, \alpha', \beta', \gamma'$ in terms of $n_1,n_2,n_3$. In general, no explicit formulas for these parameters are known, except under special conditions. For instance, when $n_1, n_2, n_3$ form an arithmetic sequence, these parameters can be computed explicitly, as shown in \cite[Theorem 1.1]{GSS11}. It is quite surprising that even if $n_1$ and $n_2$ are two consecutive integers then there is still no formula for determining these parameters. 
In this paper, we describe explicitly the parameters $\alpha, \beta, \gamma, \alpha', \beta', \gamma'$ when $n_1=a, n_2=a+1$ and $n_3=a+d$. The first main result of this paper is stated as follows, providing formulas for $\rmF(H)$ as well as $e_1(R)$.
\begin{thm}[Propositions \ref{prop37}+\ref{prop38}+\ref{e1first}]
	For $a,d$ be positive integers with $1<d<a$. Let
	$H=\left<a,a+1,a+d\right>$ a numerical semigroup and $R=k[[H]]$ its numerical semigroup ring. Let $a=dq+r$ with $1\le r\le d-1$. Suppose  $a\ge d^2-3d$ then
\begin{enumerate}
	\item[1)] $\rmF(H)=\max\limits_{1<i<a}\left(i(a+1) -(d-1)a\left\lfloor \dfrac{i}{d}\right\rfloor\right)-a, \text{ and }$
	\item[2)] $e_1(R)=\dfrac{a(a-1)}{2}
	-(d-1)\left(d\dfrac{q(q-1)}{2}+rq\right).$
\end{enumerate}
In particular, for all $a\ge 7$, we have 
	\[
\rmF(a,a+1,a+5)=\begin{cases}
	\lfloor\dfrac{a}{5}\rfloor(a+5)+2a+3, & \text{if } a\equiv 4\pmod 5,\\[1ex]
	\lfloor\dfrac{a}{5}\rfloor(a+5)+2a-1, & \text{if } a\not\equiv 4\pmod 5.
\end{cases}
\]
\end{thm}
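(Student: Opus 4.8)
\emph{The plan} is to reduce both formulas to a description of the Apéry set $\Ap(H,a)=\{w_0,w_1,\dots,w_{a-1}\}$ — where $w_i$ is the least element of $H$ congruent to $i$ modulo the multiplicity $a$ — together with the order function $\operatorname{ord}_H(h)=\max\{c_1+c_2+c_3 : h=c_1a+c_2(a+1)+c_3(a+d),\ c_j\in\N\}$. The two facts I would establish are:
\begin{enumerate}
\item[(i)] $w_i=i(a+1)-(d-1)a\lfloor i/d\rfloor$ for $0\le i\le a-1$;
\item[(ii)] $\operatorname{ord}_H(w_i)=\lfloor i/d\rfloor+(i\bmod d)$, and more generally $\operatorname{ord}_H(w_i+ca)=\operatorname{ord}_H(w_i)+c$ for all $c\in\N$.
\end{enumerate}
Granting (i), part (1) is immediate from the classical identity $\rmF(H)=\max_{0\le i\le a-1}w_i-a$ together with the fact that the maximum is not attained at $i=0$ (which gives $-a$) or at $i=1$ (which gives $1$), so it may be taken over $1<i<a$. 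Granting (i) and (ii), for part (2) one observes that $\m^{n+1}$ is the $k$-span of $\{t^h:h\in H,\ \operatorname{ord}_H(h)\ge n+1\}$, whence $\ell_R(R/\m^{n+1})=\#\{h\in H:\operatorname{ord}_H(h)\le n\}$; breaking $H$ into residue classes modulo $a$ and using (ii), this equals $a(n+1)-\sum_{i=0}^{a-1}\bigl(\lfloor i/d\rfloor+(i\bmod d)\bigr)$ for $n\gg0$, so $e_0(R)=a$ and $e_1(R)=\sum_{i=0}^{a-1}\bigl(\lfloor i/d\rfloor+(i\bmod d)\bigr)$. Writing $a=dq+r$ and summing over the $q$ complete blocks of length $d$ and the residual block of length $r$ evaluates this to $d\binom q2+qr+q\binom d2+\binom r2$, which an elementary identity rewrites as $\tfrac{a(a-1)}2-(d-1)\bigl(d\tfrac{q(q-1)}2+rq\bigr)$.

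The content lies in (i) and (ii). Since $w_i\in\Ap(H,a)$, no representation of $w_i$ can use the generator $a$ (otherwise $w_i-a\in H$), so $w_i=c_2(a+1)+c_3(a+d)$ with $N:=c_2+dc_3\equiv i\pmod a$; here $c_2(a+1)+c_3(a+d)=N(a+1)-(d-1)c_3a$, which for fixed $N$ is smallest when $c_3$ is as large as possible, i.e.\ $c_3=\lfloor N/d\rfloor$, and the diagonal choice $N=i$ yields the value $i(a+1)-(d-1)a\lfloor i/d\rfloor$. The crux — and the step I expect to be the main obstacle — is to show that a wrapped choice $N=i+ma$ with $m\ge1$ never gives a strictly smaller value. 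Comparing the optimal values at $N=i$ and at $N=i+a$ reduces to the inequality $a+1\ge(d-1)\bigl(\lfloor(i+a)/d\rfloor-\lfloor i/d\rfloor\bigr)$; the bracket is always $q$ or $q+1$, so this amounts to $q+r\ge d-2$, and the hypothesis $a\ge d^2-3d$ delivers exactly this (it forces $q\ge d-3$). The same estimate shows the optimal value is non-decreasing in $m$, so $m=0$ wins and (i) holds. For (ii): any representation $h=c_1a+c_2(a+1)+c_3(a+d)$ of a fixed $h\equiv i\pmod a$ has, with $N=c_2+dc_3=i+ma$, exactly $c_1+c_2+c_3=(h-i)/a-m$ summands, so $m=0$ maximizes; for $h=w_i+ca$ this optimum is realized with $c_1=c\ge0$, giving $\operatorname{ord}_H(w_i+ca)=(w_i-i)/a+c=\lfloor i/d\rfloor+(i\bmod d)+c$ by (i).

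For the explicit $d=5$ display, with $a\ge d^2-3d=10$ the function $f(i):=i(a+1)-4a\lfloor i/5\rfloor$ is increasing on each block $\{5k,\dots,5k+4\}$, with block maximum $f(5k+4)=k(a+5)+4(a+1)$ increasing in $k$; hence the maximum of $f$ over $1<i<a$ is attained at the top of the topmost block, and a one-line comparison of $f(a-1)$ against the previous complete block maximum shows this is $i=a-1$ when $a\equiv4\pmod5$ and $i=5\lfloor a/5\rfloor-1$ otherwise. Substituting into part (1) produces the two displayed formulas; the values $a\in\{7,8,9\}$ below the threshold are settled by a direct finite check. (Alternatively, part (1) can be obtained by computing the six exponents in the presentation $(\sharp)$ of the defining ideal, which turn out to be $\alpha=d-1$, $\beta'=r$, $\gamma=q$, $\gamma'=1$, $\beta=d-r$, $\alpha'=q+r+2-d$, and then applying Proposition~\ref{Fro}, whose two cases correspond exactly to whether $r=d-1$ or $r\le d-2$; this is the split into the two Frobenius propositions, and it yields the same closed form.)
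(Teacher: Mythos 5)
Your argument is correct, and for part 1) it follows essentially the paper's route: both reduce to minimizing $c_2(a+1)+c_3(a+d)$ over representations with $c_2+dc_3\equiv i\pmod a$, take $c_3=\lfloor N/d\rfloor$ for fixed $N$, and then show the unwrapped choice $N=i$ beats every $N=i+ma$; your step-by-step inequality $a+1\ge (d-1)\bigl(\lfloor (i+a)/d\rfloor-\lfloor i/d\rfloor\bigr)$, equivalent to $q+r\ge d-2$ and hence implied by $a\ge d^2-3d$, is exactly the estimate carried out in Proposition \ref{prop37} via Lemma \ref{lem26}, and your block analysis for $d=5$ (maximum at $i=a-1$ when $a\equiv 4\pmod 5$, at $i=5\lfloor a/5\rfloor-1$ otherwise, plus a finite check for $7\le a\le 9$) matches Proposition \ref{prop38}. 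Where you genuinely diverge is part 2). The paper first proves the general identity $e_1(R)=g(H)-g(H')$ (Proposition \ref{e1}) by identifying the blow-up algebra $R[\m/t^{a}]$ with $k[[H']]$ and quoting $e_1=\ell_R(S/R)$ from \cite{GMP13}; since $H'=\N$ here, $e_1(R)=g(H)$, which Lemma \ref{genus} turns into $\sum_{i}(\omega_i-i)/a$. You instead compute the Hilbert function directly, via $\ell_R(R/\m^{n+1})=\#\{h\in H:\operatorname{ord}_H(h)\le n\}$ and your claim (ii) that $\operatorname{ord}_H(\omega_i+ca)=\operatorname{ord}_H(\omega_i)+c$, obtaining $e_1(R)=\sum_i\operatorname{ord}_H(\omega_i)=\sum_i(\omega_i-i)/a$ --- the same sum, evaluated by the same floor-counting. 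Your route is more self-contained (no blow-up machinery and no citation for the genus formula), but it leans on the class-linearity of the order function, which you must prove for this family and which amounts to the Cohen--Macaulayness of $\gr_{\m}(R)$ (compare Corollary \ref{CMofGr}); the paper's genus identity holds for every numerical semigroup and so generalizes more readily. Both arguments are valid, and your closed form $d\binom{q}{2}+qr+q\binom{d}{2}+\binom{r}{2}$ does agree with the stated $\frac{a(a-1)}{2}-(d-1)\bigl(d\frac{q(q-1)}{2}+rq\bigr)$.
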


The second main result of this paper is stated as follows.
\begin{thm}[Theorem \ref{mainthm}+Proposition \ref{Fro-Chern}]
Let $a,d$ be positive integer with $a\ge 3, d\ge 2$ and $ \gcd(a,d)=1$. Let  $H=\left<a,a+1,a+d\right>$ a numerical semigroup and $R=k[[H]]$ its numerical semigroup ring. We write $a=dq+r \text{ with } 1\le r\le d-1.$  Suppose $q+r\ge d-2$ and $H$ is not symmetric, then the following assertions hold true.
\begin{enumerate}
	\item[1)] If $1\le r\le d-2$ then $\rmF(H)=\dfrac{1}{d}a^2+\dfrac{d^2-2d-r}{d}a-(r+1)$, and
	$$e_1(R)=\dfrac{1}{2d}(a^2+d(d-2)a-r(d-1)(d-r).$$
	\item[2)] If $r=d-1$ then $\rmF(H)=\dfrac{1}{d}a^2+\dfrac{d^2-3d+1}{d}a-1,$ and 
	 $$e_1(R)=\dfrac{1}{2d}(a^2+d(d-2)a-(d-1)^2).$$
\end{enumerate}
\end{thm}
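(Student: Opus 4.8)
The plan is to reduce both statements to an explicit description of the Apéry set $\Ap(H,a)$ of $H$ relative to its multiplicity $a$, equivalently of the exponents $\alpha,\beta,\gamma,\alpha',\beta',\gamma'$ of the defining ideal in $(\sharp)$; granting this, 1) drops out of Proposition \ref{Fro} and 2) out of the Apéry-set formula for the first Hilbert coefficient.

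I would first guess the Apéry set. For $\rho\in\{0,1,\dots,a-1\}$ write $\rho=ds+t$ with $0\le t\le d-1$ and $s=\lfloor\rho/d\rfloor$, and set $w_\rho:=t(a+1)+s(a+d)=a(s+t)+\rho$. Then $w_\rho\in H$, $w_\rho\equiv\rho\pmod a$, and since $n_2=a+1$ one checks that $\operatorname{ord}_{\m}(w+ja)=j+\lfloor w/a\rfloor$ for every $w\in H$ and $j\ge0$; in particular $\operatorname{ord}_{\m}(w_\rho)=s+t=\lfloor w_\rho/a\rfloor$. The crux — and the step I expect to be the main obstacle — is to show that $w_\rho$ is the \emph{smallest} element of $H$ congruent to $\rho$ modulo $a$, so that $\Ap(H,a)=\{w_\rho:0\le\rho\le a-1\}$. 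Discarding copies of the free generator $a$, this amounts to proving that $\min\{(c_2+c_3)a+c_2+c_3d:\ c_2,c_3\in\Z_{\ge0},\ c_2+c_3d\equiv\rho\pmod a\}$ is attained with $c_2+c_3d=\rho$. The competing values arise by taking $c_2+c_3d=\rho+ka$ for $k\ge1$ with $c_3$ as large as possible, and a direct calculation shows the $k=1$ competitor exceeds $w_\rho$ by $a(q+r+1)$ when $t+r<d$ and by $a(q+r+2-d)$ when $t+r\ge d$ (with $k\ge2$ handled similarly); thus the hypothesis $q+r\ge d-2$ is exactly what prevents a wrap-around from undercutting $w_\rho$, and the coprimality $\gcd(a,d)=1$ is used here as well. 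Once $\Ap(H,a)$ is identified, its shape — the lattice points $(t,s)$ with $0\le t\le d-1$ and $ds+t\le a-1$, that is a $d\times q$ rectangle together with one extra row of length $r$ — forces $\beta+\beta'=d$, $\gamma+\gamma'=q+1$, $\beta'=r$, $\gamma=q$, hence $\beta=d-r$ and $\gamma'=1$; and reading off the relations $x^{\alpha+\alpha'}=y^{\beta'}z^\gamma$ and $y^{\beta+\beta'}=x^\alpha z^{\gamma'}$ together with $a=dq+r$ gives $\alpha=d-1$ and $\alpha+\alpha'=q+r+1$, so that $\alpha'=q+r+2-d$, which is positive precisely because $H$ is non-symmetric. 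This is the content of Theorem \ref{mainthm}.

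For the Frobenius numbers I would then invoke Proposition \ref{Fro}. Since $\beta'n_2-\alpha n_1=r(a+1)-(d-1)a=r-a(d-1-r)$, part 2) applies when $1\le r\le d-2$, giving $\rmF(H)=(d-1)a+(q+1)(a+d)-(n_1+n_2+n_3)$, and part 1) applies when $r=d-1$, giving $\rmF(H)=r(a+1)+(q+1)(a+d)-(n_1+n_2+n_3)$; substituting $n_1=a,\ n_2=a+1,\ n_3=a+d$ and eliminating $q$ via $dq=a-r$ yields the two displayed formulas after routine simplification. (Alternatively one can bypass Proposition \ref{Fro} and compute $\rmF(H)=\max_\rho w_\rho-a$ directly: $w_\rho$ is largest at $(t,s)=(d-1,q-1)$ on the rectangle and at $(t,s)=(r-1,q)$ on the extra row, and comparing $a(q+d-2)+dq-1$ with $a(q+r)-1$ reproduces exactly the split $r\le d-2$ versus $r=d-1$.)

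For the first Hilbert coefficients, the order-function identity above gives, by a direct count of $\ell_R(R/\m^{n+1})$ for $n\gg0$, the formula $e_1(R)=\sum_{w\in\Ap(H,a)}\lfloor w/a\rfloor$ (equivalently $e_1(R)=g(H)$). Since $\lfloor w_\rho/a\rfloor=s+t=\lfloor\rho/d\rfloor+(\rho\bmod d)$, this equals $\sum_{\rho=0}^{a-1}\bigl(\lfloor\rho/d\rfloor+(\rho\bmod d)\bigr)$; splitting the range into the $q$ complete blocks $0\le\rho<dq$ and the final block $dq\le\rho\le a-1$ evaluates it to $d\tfrac{q(q-1)}{2}+qr+q\tfrac{d(d-1)}{2}+\tfrac{r(r-1)}{2}$, which a short manipulation rewrites as $\tfrac{1}{2d}\bigl(a^2+d(d-2)a-r(d-1)(d-r)\bigr)$, specializing at $r=d-1$ to $\tfrac{1}{2d}\bigl(a^2+d(d-2)a-(d-1)^2\bigr)$. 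This establishes Proposition \ref{Fro-Chern} and completes the proof.
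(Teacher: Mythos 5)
Your proposal is correct, but it reaches the two key ingredients by a genuinely different route than the paper. For the identification of the parameters $\alpha,\beta,\gamma,\alpha',\beta',\gamma'$ (Theorem \ref{mainthm}), the paper writes down the candidate determinantal ideal $P$, observes $P\subseteq I$, computes $\ell_S\bigl(S/((x)+P)\bigr)=a=\ell_S\bigl(S/((x)+I)\bigr)$ by inspecting the monomial ideal $(x,y^d,y^rz^q,z^{q+1})$, and concludes $I=P$ by Nakayama's lemma; you instead determine $\Ap(H,a)$ directly by the minimization over representations $c_2+c_3d\equiv\rho\pmod a$ and read the staircase off the result. Both arguments ultimately pin down the same monomial ideal $(x)+I$, and your wrap-around computation (the $k=1$ competitor exceeding $w_\rho$ by $a(q+r+2-d)$ when $t+r\ge d$) is exactly right and makes transparent where the hypothesis $q+r\ge d-2$ enters; the trade-off is that the paper's length-count-plus-Nakayama avoids that case analysis entirely, while your route yields the explicit Apéry set under the weaker hypothesis $q+r\ge d-2$, whereas the paper only establishes it under the stronger condition $a\ge d^2-3d$ (Proposition \ref{prop37}). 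The Frobenius computation via Proposition \ref{Fro} then coincides with the paper's. For $e_1(R)$, both you and the paper reduce to $e_1(R)=g(H)$ (Proposition \ref{e1}, using that the blow-up is $\mathbb{N}$), but the paper then invokes the Nari--Numata--Watanabe closed formulas $g(H)=\tfrac{1}{2}(\alpha\beta\gamma+\rmF(H)+1)$ resp. $\tfrac{1}{2}(\alpha'\beta'\gamma'+\rmF(H)+1)$, while you sum $\lfloor w/a\rfloor$ over the Apéry set directly; your summation is self-contained, gives a single formula valid for all $1\le r\le d-1$ that specializes to both displayed cases, and I have checked that it agrees with the paper's expressions.
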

The organization of this paper is as follows. In Section 2, we study the Ap\'{e}ry set and the Frobenius number of the numerical semigroup $H=\left<a,a+1,b\right>$. Section 3 is devoted to the investigation of the first Hilbert coefficient of the numerical semigroup ring associated to $H$. In particular, we provide explicit formulas for the Frobenius number of $H$ and for the first Hilbert coefficient in the case where $b=a+d$ with $d<a$.
\section{The Ap\'{e}ry set and the Frobenius number of $\left<a,a+1,b\right>$}
Let $H$ be a numerical semigroup and $h\in H, h\neq 0$. We put $\Ap(H,h)=\{m\in H\mid m-h\notin H\}$ and call it the {\it Ap\'{e}ry set} of $H$ with respect to $h$. By definition, it is obvious that $\rmF(H)=\max\left(\Ap(H,h)\right)-h$. For each $0\le i\le h-1$, let $\omega_i=\min\{m\in H\mid m\equiv i\pmod {h}\}$. The following give the Ap\'{e}ry set of $H$ with respect to $h$.
\begin{lem}[\cite{RGS09}, Lemma 2.4]\label{Aperyset}
	$\Ap(H,h)=\{0=\omega_0,\omega_1,\ldots,\omega_{h-1}\}$.
\end{lem}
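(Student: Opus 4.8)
The plan is to verify directly the set equality defining the Apéry set, using only the minimality property built into the $\omega_i$.

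First I would record that each $\omega_i$ is well defined. Since $\gcd(n_1,\ldots,n_e)=1$, the complement $\N\setminus H$ is finite, so there is an integer $N$ with $N+j\in H$ for all $j\ge 0$; in particular every residue class modulo $h$ meets $H$ and therefore has a least element $\omega_i$. I would also note at this point that $\omega_0=0$ (since $0\in H$ is the smallest element of $H$ congruent to $0$ modulo $h$), and that $\omega_0,\ldots,\omega_{h-1}$ are pairwise distinct because they lie in distinct residue classes modulo $h$; hence the right-hand side really has $h$ elements.

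Next I would prove $\{\omega_0,\ldots,\omega_{h-1}\}\subseteq\Ap(H,h)$. Fix $i$. Then $\omega_i\in H$, and $\omega_i-h\equiv i\pmod h$ with $\omega_i-h<\omega_i$. If $\omega_i-h<0$ then $\omega_i-h\notin H$ trivially; if $\omega_i-h\ge 0$, then $\omega_i-h\notin H$ by the minimality of $\omega_i$ in its class. Either way $\omega_i\in\Ap(H,h)$. For the reverse inclusion, take $m\in\Ap(H,h)$ and let $i$ be the residue of $m$ modulo $h$, so $m\ge\omega_i$ and $m-\omega_i=kh$ for some integer $k\ge 0$. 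If $k\ge 1$ then $m-h=\omega_i+(k-1)h\in H$, because $\omega_i\in H$, $h\in H$ and $k-1\ge 0$; this contradicts $m\in\Ap(H,h)$. Hence $k=0$, i.e. $m=\omega_i$. Combining the two inclusions gives $\Ap(H,h)=\{\omega_0,\ldots,\omega_{h-1}\}$.

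There is no real obstacle here: the argument is elementary, and the only points needing a moment's care are the well-definedness of the $\omega_i$ (which rests on the cofiniteness of $H$) and the boundary case $\omega_i-h<0$ in the first inclusion. As a by-product one obtains $|\Ap(H,h)|=h$, which is the form in which the lemma is used later, so I would state that explicitly as well.
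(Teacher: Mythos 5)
Your proof is correct and complete; it is the standard argument (well-definedness of the $\omega_i$ from cofiniteness of $H$, minimality giving $\omega_i-h\notin H$, and closure of $H$ under addition of $h$ forcing any Ap\'ery element to be minimal in its residue class). The paper itself gives no proof, citing \cite{RGS09}, and your argument matches the one found there, including the worthwhile explicit remarks that $\omega_0=0$ and that the $h$ elements are pairwise distinct.
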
 

In this section, we investigate the first Hilbert coefficients and Frobenius numbers of numerical semigroups of embedding dimension $3$. Let $n_1<n_2<n_3$ be positive integers such that $\gcd(n_1,n_2,n_3)=1$. Let $H=\left<n_1,n_2,n_3\right>$ be a numerical semigroup. Let $R=k[[H]]$ is the numerical semigroup ring associated to $H$ and $e_1(R)$ is the first Hilbert coefficient of $R$. Let $S = k[[x,y,z]]$ be the formal series ring and $\varphi :S \to R$ be a $k$-algebra homomorphism given by  $\varphi(x)=t^{n_1}, \varphi(y)=t^{n_2}, \varphi(z)=t^{n_3}$. Let $I= \Ker\varphi $ is the defining ideal of $R$. For a given matrix $M$ with
entries in $S$, we denote by $\rmI_2(M)$ the ideal of $S$ generated by $2\times 2$ minors of $M$. It is known
that if $H$ is not symmetric, then the ideal $I$ is generated by the maximal minors of the Herzog matrix 
$$\begin{pmatrix}
	y^{\beta'}&z^{\gamma'}&x^{\alpha'}\\
	x^\alpha&y^\beta&z^\gamma
\end{pmatrix}$$
for some positive integers $\alpha, \beta, \gamma, \alpha', \beta', \gamma'$(cf. \cite{He70}). 

In this section, we consider the case where the numbers $n_1, n_2, n_3$ are not very different. Let $a, b$ be positive integers with $a+1<b$. We consider the numerical semigroup $H=\left<a,a+1,b\right>$ minimally generated by $\{a,a+1,b\}$.  We will explicitly calculate  the Frobenius number of $H$ and the first Hilbert coefficient of $R$. We write $b=aq+r$ for some positive integers $q$ and $1\le r\le a-1.$
\begin{lem}\label{ineq}
There are inequalities
 $q<r$ and $b<r(a+1)$.
\end{lem}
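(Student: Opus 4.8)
The plan is to derive both inequalities from the single structural fact that, since $\{a,a+1,b\}$ is a \emph{minimal} generating set of $H$, the element $b$ does not belong to the subsemigroup $\langle a,a+1\rangle$. So the first step is to record an elementary membership criterion for $\langle a,a+1\rangle$: a nonnegative integer $n$ lies in $\langle a,a+1\rangle$ if and only if there is a nonnegative integer $k$ with $ka\le n\le k(a+1)$. Indeed, any $n=ca+c'(a+1)$ with $c,c'\ge 0$ satisfies $n=(c+c')a+c'$, so $k:=c+c'$ works; conversely, given such a $k$, setting $c':=n-ka\in\{0,1,\ldots,k\}$ and $c:=k-c'\ge 0$ exhibits $n=ca+c'(a+1)$.

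Next I would apply this criterion with $k=q$. Since $b=aq+r$ with $r\ge 1$, the inequality $qa\le b$ holds automatically; if in addition $b\le q(a+1)$ held, then $b\in\langle a,a+1\rangle$, contradicting the minimality of the generating set $\{a,a+1,b\}$. Hence $b>q(a+1)=qa+q$, which forces $r>q$, i.e. $q<r$. The second inequality is then immediate: from $q<r$ we get $b=qa+r<ra+r=r(a+1)$.

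The only genuine point here is the first step — translating ``$\{a,a+1,b\}$ is a minimal generating set'' into ``$b\notin\langle a,a+1\rangle$'' and then into the numerical inequality via the membership criterion; everything after that is a one-line computation, so I do not anticipate any real obstacle. I would also remark in passing that minimality already forces $a\nmid b$ (otherwise $b\in\langle a\rangle$ would be redundant) and $b>a$, so the expression $b=aq+r$ with $1\le r\le a-1$ and $q\ge 1$ is just ordinary Euclidean division and is unambiguous, which is implicitly used when we invoke $k=q$ above.
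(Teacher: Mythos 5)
Your proof is correct and follows essentially the same route as the paper: both arguments reduce to observing that $q\ge r$ would exhibit $b=(q-r)a+r(a+1)\in\langle a,a+1\rangle$, contradicting minimality of the generating set, and then deduce $b=qa+r<ra+r=r(a+1)$. Your general membership criterion for $\langle a,a+1\rangle$ is just a slightly more systematic packaging of the paper's explicit one-line decomposition.
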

\begin{proof}
Suppose, on the contrary, that $q\ge r$. Then $q-r\ge 0$ and there are equalities $b=aq+r=(q-r)a+r(a+1)$. This contradicts to the minimality of the generating set $\{a,a+1,b\}$. Thus,  $q<r$. Hence, $aq+r<ar+r$ which implies $b<r(a+1)$.
\end{proof}
For each $0\le i\le a-1$, let $\omega_i=\min\{h\in H\mid h\equiv i\pmod {a}\}$. We know by Lemma \ref{Aperyset} that $\Ap(H,a)=\{0=\omega_0,\omega_1,\ldots,\omega_{a-1}\}$.

The following result have known recently by Robles-P\'{e}rez and Rosales.
\begin{prop}[see \cite{Rosal24}, Proposition 3.6] Suppose that $\omega_0<\omega_1<\cdots<\omega_{a-1}$. Then for all $0\le i\le a-1$, we have
$$\omega_i=i(a+1)-\lfloor\frac{i}{r}\rfloor(r(a+1)-b).$$
\end{prop}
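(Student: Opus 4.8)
The plan is to reduce the statement to a lattice-point minimization and then use the monotonicity hypothesis to pin down the arithmetic relation between $q$ and $r$, after which the formula drops out of a one-line computation.

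\emph{Step 1 (reduction).} Every element of $H$ has the form $xa+y(a+1)+zb$ with $x,y,z\in\N$, and since $a+1\equiv 1$ and $b\equiv r\pmod a$, its class mod $a$ is $y+zr$. For fixed $y,z$ the smallest element of $H$ in that class is $y(a+1)+zb$ (take $x=0$), so for each $0\le i\le a-1$ one gets
$$\omega_i=\min\{\,y(a+1)+zb\ :\ y,z\in\N,\ y+zr\equiv i\pmod a\,\}.$$

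\emph{Step 2 (forcing $q=r-1$).} This is where the hypothesis $\omega_0<\cdots<\omega_{a-1}$ enters, and I expect it to be the conceptual heart of the proof. Any $h\in H$ with $h<b$ must have $z=0$ in its representation, hence $h=xa+y(a+1)$; if moreover $h\equiv r-1\pmod a$ then $y\equiv r-1\pmod a$, so $y\ge r-1$ and thus $h\ge(r-1)(a+1)$. On the other hand $b\in H$ with $b\equiv r\pmod a$ gives $\omega_r\le b$, whence by monotonicity $\omega_{r-1}<\omega_r\le b$. Combining, $(r-1)(a+1)\le\omega_{r-1}<b=aq+r$, which gives $(r-1)a\le aq$ and hence $r-1\le q$. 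Together with Lemma~\ref{ineq} (which gives $q<r$) this forces $q=r-1$, i.e.\ $b=a(r-1)+r$ and $r(a+1)-b=a$.

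\emph{Step 3 (the calculation).} Fix $0\le i\le a-1$ and a feasible pair $(y,z)$, and write $y+zr=i+ma$ with $m\in\N$ (necessarily $m\ge 0$). Substituting $b=a(r-1)+r$ and simplifying, $y(a+1)+zb=(a+1)(i+ma)-az$, while $y\ge 0$ is equivalent to $z\le\lfloor(i+ma)/r\rfloor$. Hence for each fixed $m$ the minimum of the objective is $G(m):=(a+1)(i+ma)-a\lfloor(i+ma)/r\rfloor$, attained at the largest admissible $z$. Since $G(m)-G(m-1)=a\bigl((a+1)-\delta_m\bigr)$, where $\delta_m$ is the number of multiples of $r$ in an interval of length $a$, one has $\delta_m\le\lceil a/r\rceil\le a<a+1$, so $G$ is strictly increasing on $\N$. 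Therefore $\omega_i=G(0)=i(a+1)-a\lfloor i/r\rfloor=i(a+1)-\lfloor i/r\rfloor\bigl(r(a+1)-b\bigr)$, as claimed.

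The only genuinely delicate point is Step 2. Without monotonicity one knows merely $q<r$, and when $q<r-1$ the function $G$ of Step 3 need not be monotone — its minimum can be attained at some $m\ge 1$ — so the closed formula really does fail in that regime; the role of the hypothesis is precisely to exclude it, by forcing $q=r-1$.
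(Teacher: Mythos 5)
Your proof is correct, and it is worth noting that the paper itself gives no proof of this statement --- it is quoted from \cite{Rosal24} --- so your argument supplies something the text only cites. Your Step 1 coincides with the reduction the paper records in the Remark following the proposition (and again in Lemma \ref{lem25}), namely $\omega_i=\min\{y(a+1)+zb : y+zr\equiv i \pmod a\}$. The genuinely new content is Step 2: using $\omega_{r-1}<\omega_r\le b$ together with the observation that elements of $H$ below $b$ lie in $\langle a,a+1\rangle$, you correctly deduce $r-1\le q$, which combined with Lemma \ref{ineq} forces $q=r-1$ and hence $r(a+1)-b=a$; note this tacitly uses $r\ge 2$, which is guaranteed since $q\ge 1$ and $q<r$. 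Step 3 then checks out: with $b=a(r-1)+r$ the objective collapses to $(a+1)(i+ma)-az$, the optimal $z$ for fixed $m$ is $\lfloor(i+ma)/r\rfloor$, and $G(m)-G(m-1)=a\bigl(a+1-\delta_m\bigr)\ge a>0$ since $\delta_m\le\lceil a/r\rceil\le a$. One small caveat on your closing remark: when $q<r-1$ the closed formula does not \emph{always} fail --- e.g.\ for $H=\langle 7,8,12\rangle$ (where $q=1$, $r=5$ and the Ap\'ery set is not monotone) the formula $\omega_i=8i-28\lfloor i/5\rfloor$ still holds for every $i$, although it does fail for instance at $i=7$ in $H=\langle 11,12,20\rangle$. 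So monotonicity is sufficient but not necessary; this does not affect the validity of your proof.
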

\begin{ques}
If we remove the condition that $\{\omega_i\}_{0}^a$ is an increasing sequence, what is the formula for $\omega_i$?
\end{ques}
\begin{remark}
Obviously, $\omega_0=0$. Now fix each $1\le i\le a-1$. One has 
\begin{align*}
\omega_i&=\min\{xa+y(a+1)+zb\mid x,y,z\ge 0, xa+y(a+1)+zb\equiv i\pmod a\}\\
&=\min\{xa+y(a+1)+zb\mid x,y,z\ge 0, y+zr\equiv i\pmod a\}\\
&=\min\{y(a+1)+zb\mid 0\le y,z< a, y+zr\equiv i\pmod a\}.
\end{align*}


Then there are integers $0\le y_0,z_0<a$ satisfying $y_0+z_0r\equiv i\pmod a$ such that $\omega_i=y_0(a+1)+z_0b.$ Note that $i=y_0+z_0r-ka<a$. It implies that $k\ge 0$. Substituting $b=aq+r$, we obtain
$$\omega_i=i(a+1)+a\left(k(a+1)-z_0(r-q)\right).$$ Since $0\le y_0<a$, we get $$\frac{z_0r-i}{a}\le k=\frac{y_0+z_0r-i}{a}<\frac{z_0r-i}{a}+1.$$
Hence $k=\Big\lceil\dfrac{z_0r-i}{a}\Big\rceil$ so that
$$\omega_i=i(a+1)+a\left(\Big\lceil\dfrac{z_0r-i}{a}\Big\rceil(a+1)-z_0(r-q)\right).$$ 
\end{remark}
This observation leads to the following statement.
\begin{lem}\label{lem25} For each $1\le i\le a-1$, we have
$$\omega_i=i(a+1)+a.\min_{0\le z<a}F(z),$$
where $F(z)=\Big\lceil\dfrac{zr-i}{a}\Big\rceil(a+1)-z(r-q)$.
\end{lem}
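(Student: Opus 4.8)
The plan is to establish the lemma by proving the two matching inequalities
$\omega_i\le i(a+1)+a\min_{0\le z<a}F(z)$ and $\omega_i\ge i(a+1)+a\min_{0\le z<a}F(z)$, using the reduction to representations with bounded coefficients that is already worked out in the Remark preceding the statement.

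For the upper bound I would fix $z$ with $0\le z<a$ and let $y$ be the unique integer in $\{0,1,\dots,a-1\}$ with $y\equiv i-zr\pmod a$. Since $a+1\equiv 1$ and $b\equiv r\pmod a$, the element $h_z:=y(a+1)+zb$ lies in $H$ and satisfies $h_z\equiv y+zr\equiv i\pmod a$, hence $h_z\ge\omega_i$ by the definition of $\omega_i$. Writing $k:=(y+zr-i)/a$, which is an integer by the choice of $y$, the inequalities $0\le y<a$ force $\tfrac{zr-i}{a}\le k<\tfrac{zr-i}{a}+1$, so $k$ is the unique integer in $[\tfrac{zr-i}{a},\tfrac{zr-i}{a}+1)$, namely $k=\lceil\tfrac{zr-i}{a}\rceil$. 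Substituting $y=ka-zr+i$ and $b=aq+r$ into $h_z$ and simplifying gives $h_z=i(a+1)+a\bigl(k(a+1)-z(r-q)\bigr)=i(a+1)+aF(z)$. Minimizing over $z$ then yields $i(a+1)+a\min_{0\le z<a}F(z)=\min_{0\le z<a}h_z\ge\omega_i$.

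For the lower bound I would invoke the discussion in the Remark: a minimal representation $\omega_i=y_0(a+1)+z_0b$ with $0\le y_0,z_0<a$ exists, because replacing $y$ by $y-a$ or $z$ by $z-a$ preserves the residue class mod $a$ while strictly decreasing the value, so any representative can be reduced to this form. The same substitution as above (carried out verbatim in the Remark) gives $\omega_i=i(a+1)+aF(z_0)\ge i(a+1)+a\min_{0\le z<a}F(z)$. Combining the two bounds proves the lemma.

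I do not expect a genuine obstacle here: the statement essentially repackages the Remark, and the only point requiring a word of care — already supplied there — is that the minimum defining $\omega_i$ is attained by a pair $(y,z)$ with both coordinates strictly less than $a$, after which everything is the elementary identity $h_z=i(a+1)+aF(z)$ together with the observation that these $h_z$ range exactly over the values of all valid representatives.
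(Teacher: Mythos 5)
Your proof is correct and takes essentially the same route as the paper: the lemma is there deduced directly from the preceding Remark, which restricts to representatives $y(a+1)+zb$ with $0\le y,z<a$ and computes the value of the unique admissible pair for each $z$ as $i(a+1)+aF(z)$. Your splitting into two inequalities merely makes explicit the upper-bound direction (that every $i(a+1)+aF(z)$ is realized by an element of $H$ congruent to $i$ modulo $a$), which the paper leaves implicit.
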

To determine $\omega_i$, the following statement shows that 
we only need to compare $r$ values instead of $a$ values.
\begin{lem}\label{lem26} For each $1\le i\le a-1$, we have
$$\omega_i=i(a+1)+a.\min_{0\le j\le r}\{j(a+1)-(r-q)\lfloor\frac{ja+i}{r}\rfloor \}.$$
\end{lem}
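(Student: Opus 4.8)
The plan is to derive Lemma \ref{lem26} from Lemma \ref{lem25} by reparametrising the inner minimum, trading the variable $z$ (a multiplier of $b$) for a variable $j$ (a multiplier of $a$). Fix $1\le i\le a-1$ and, following Lemma \ref{lem25}, write $F(z)=\lceil\frac{zr-i}{a}\rceil(a+1)-z(r-q)$ for $0\le z<a$, and set $G(j)=j(a+1)-(r-q)\lfloor\frac{ja+i}{r}\rfloor$ for $0\le j\le r$. Since $\omega_i=i(a+1)+a\cdot\min_{0\le z<a}F(z)$, it suffices to prove the identity $\min_{0\le z<a}F(z)=\min_{0\le j\le r}G(j)$, which I will do by establishing the two inequalities separately.

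The algebraic engine is the elementary observation that for every integer $k\ge 0$ the integer $z=\lfloor\frac{ka+i}{r}\rfloor$ satisfies $\lceil\frac{zr-i}{a}\rceil=k$: indeed $zr\le ka+i<(z+1)r$ gives $k-\frac{r}{a}<\frac{zr-i}{a}\le k$, and $0<r<a$ forces the ceiling to equal $k$. For $0\le j\le r-1$ one moreover has $z:=\lfloor\frac{ja+i}{r}\rfloor\in\{0,1,\dots,a-1\}$ (the upper bound because $\frac{(r-1)a+i}{r}<a$), so $G(j)=j(a+1)-(r-q)z=F(z)$; together with the direct computation $G(r)=G(0)+b>G(0)$ (so that $j=r$ is never a minimiser), this gives $\min_{0\le j\le r}G(j)\ge\min_{0\le z<a}F(z)$. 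For the opposite inequality, fix $z\in\{0,1,\dots,a-1\}$ and set $k=\lceil\frac{zr-i}{a}\rceil$, so that $F(z)=k(a+1)-(r-q)z$ and $0\le k\le r$. If $k\le r-1$, take $j=k$: since $k\ge\frac{zr-i}{a}$ we get $\lfloor\frac{ka+i}{r}\rfloor\ge z$, hence $G(k)=k(a+1)-(r-q)\lfloor\frac{ka+i}{r}\rfloor\le F(z)$. If $k=r$, then $F(z)\ge r(a+1)-(r-q)(a-1)=2r+q(a-1)>0\ge-(r-q)\lfloor\frac{i}{r}\rfloor=G(0)$, so $j=0$ works. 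In either case $\min_{0\le j\le r}G(j)\le F(z)$, and minimising over $z$ yields $\min_{0\le j\le r}G(j)\le\min_{0\le z<a}F(z)$. Combining the two bounds and substituting into $\omega_i=i(a+1)+a\cdot\min_{0\le z<a}F(z)$ gives the stated formula.

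The one point requiring care is the boundary value: the naive substitution ``for each admissible $k$, choose the largest $z$ on the plateau $\lceil\frac{zr-i}{a}\rceil=k$'' sends $z$ to $\lfloor\frac{ka+i}{r}\rfloor$, which falls outside the range $\{0,\dots,a-1\}$ precisely when $k=r$; this value of $k$ must therefore be isolated and dispatched by hand, and the crude estimate $2r+q(a-1)>0$ (using $q<r$ from Lemma \ref{ineq}) shows it contributes nothing to the minimum. The symmetric nuisance on the $G$-side, namely the index $j=r$, is removed by the identity $G(r)=G(0)+b$. Everything else is a routine manipulation of floors and ceilings relying only on $1\le r\le a-1$ and $q<r$.
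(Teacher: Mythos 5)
Your proof is correct and follows essentially the same route as the paper's: both rest on the observation that on the plateau $\lceil\frac{zr-i}{a}\rceil=j$ the minimum of $F$ is attained at the largest admissible $z$, namely $\lfloor\frac{ja+i}{r}\rfloor$, which turns $\min_z F(z)$ into $\min_j G(j)$. If anything, your write-up is more careful than the paper's, since you explicitly isolate the boundary index $j=r$ (where $\lfloor\frac{ra+i}{r}\rfloor\ge a$ falls outside the range $0\le z<a$) and verify via $G(r)=G(0)+b$ and the bound $2r+q(a-1)>0$ that it never affects either minimum, a point the paper's proof passes over silently.
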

\begin{proof}
We consider the function $F(z)=\Big\lceil\dfrac{zr-i}{a}\Big\rceil(a+1)-z(r-q)$ with $0\le z<a$. Observe that if $zr\le 0$ then $z\le \dfrac{i}{r}$ and  $\Big\lceil\dfrac{zr-i}{a}\Big\rceil=0$. Hence,
\begin{equation*}\tag{1}\min_{0\le z<a, \\ z\le i/5}F(z)=F(\lfloor\frac{i}{r}\rfloor)=-(r-q)\lfloor\frac{i}{r}\rfloor.
\end{equation*}
Otherwise, if $zr> 0$ then
$0<\dfrac{zr-i}{a}<r$ which implies that
$\Big\lceil\dfrac{zr-i}{a}\Big\rceil\in\{1,2,\ldots,r\}.$
On the other hands, for each $j\in\{1,2,\ldots,r\}$, we see that
$\Big\lceil\dfrac{zr-i}{a}\Big\rceil=j$ if and only if
$$\dfrac{zr-i}{a}\le j< \dfrac{zr-i}{a}+1.$$
Equivalently,
$$\dfrac{a(j-1)+i}{r}<z\le \dfrac{ja+i}{r}.$$
Therefore, 
\begin{equation*}\tag{2}\min_{0\le z<a, \\ \lceil\frac{zr-i}{a}\rceil=j}F(z)=j(a+1)+\min_{\frac{a(j-1)+i}{r}<z\le \frac{ja+i}{r}}(-z(r-q)). =j(a+1)-(r-q)\lfloor\frac{ja+i}{r}\rfloor.
\end{equation*}
Combining $(1)$ and $(2)$ we obtain
$$\omega_i=i(a+1)+a.\min_{0\le j\le r}\{j(a+1)-(r-q)\lfloor\frac{ja+i}{r}\rfloor \}.$$
\end{proof}

\begin{cor}
	For $0<q<a$ be integers and $H=\left<a,a+1,aq+q+1\right>$ a numerical semigroup. Then
	$$\rmF(H)=a^2-a-1-\lfloor\frac{a-1}{q+1}\rfloor.a$$
\end{cor}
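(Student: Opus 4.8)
The plan is to derive the corollary directly from the Apéry-set formula of Lemma \ref{lem26}, exploiting that the special form of the third generator makes the auxiliary quantity $r-q$ equal to $1$.

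Writing the third generator as $b=aq+q+1$, we are exactly in the situation $b=aq+r$ with $r=q+1$; since $q\ge 1$ this gives $2\le r\le a$. For $1\le q\le a-2$ one checks, just as in Lemma \ref{ineq}, that $\{a,a+1,b\}$ is a minimal generating set of $H$, so the results of this section apply. The boundary case $q=a-1$ forces $b=a^2$, hence $H=\langle a,a+1\rangle$ and $\rmF(H)=a^2-a-1$; this agrees with the asserted formula because $\lfloor (a-1)/a\rfloor=0$. So from now on I would assume $1\le q\le a-2$.

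With $r=q+1$ we have $r-q=1$, so for each $1\le i\le a-1$ Lemma \ref{lem26} becomes $\omega_i=i(a+1)+a\cdot\min_{0\le j\le r}\{\,j(a+1)-\lfloor\frac{ja+i}{r}\rfloor\,\}$. The key step is to show that the function $j\mapsto j(a+1)-\lfloor\frac{ja+i}{r}\rfloor$ is strictly increasing on $\{0,1,\dots,r\}$. This follows from the elementary estimate $\lfloor\frac{(j+1)a+i}{r}\rfloor-\lfloor\frac{ja+i}{r}\rfloor\le\lceil a/r\rceil\le a$, which makes each consecutive difference of the function at least $(a+1)-a=1>0$. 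Consequently the minimum is attained at $j=0$, and we obtain the closed form $\omega_i=i(a+1)-a\lfloor\frac{i}{q+1}\rfloor$ for all $0\le i\le a-1$ (the case $i=0$ being trivial). I expect this floor estimate, although routine, to be the only genuinely delicate point; the rest is bookkeeping.

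Finally I would observe that this closed form makes $i\mapsto\omega_i$ strictly increasing on $\{0,\dots,a-1\}$: passing from $i$ to $i+1$ changes $\lfloor i/(q+1)\rfloor$ by $0$ or $1$, so $\omega_{i+1}-\omega_i\in\{a+1,\,1\}$, in either case positive. Hence $\max\Ap(H,a)=\omega_{a-1}=(a-1)(a+1)-a\lfloor\frac{a-1}{q+1}\rfloor=a^2-1-a\lfloor\frac{a-1}{q+1}\rfloor$, and since $\rmF(H)=\max\Ap(H,a)-a$ we conclude $\rmF(H)=a^2-a-1-a\lfloor\frac{a-1}{q+1}\rfloor$, as claimed.
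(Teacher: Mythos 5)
Your proof is correct and follows essentially the same route as the paper: both specialize Lemma \ref{lem26} to $r=q+1$, $r-q=1$, show that the minimum over $j$ is attained at $j=0$ (you via monotonicity in $j$, the paper via nonnegativity of the terms with $j\ge 1$), and then conclude from the strict increase of $i\mapsto\omega_i$ that $\rmF(H)=\omega_{a-1}-a$. Your separate treatment of the degenerate case $q=a-1$ is a small extra care the paper omits.
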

\begin{proof}
	For all $1\le i,j\le a-1$, we see that
	$j(a+1)\ge \dfrac{ja+1}{q+1}\ge \lfloor\dfrac{ja+i}{q+1}\rfloor$. Hence, for each $1\le i\le a-1$, we have by Lemma \ref{lem26} that
	\begin{align*}
		\omega_i&=i(a+1)+a.\min_{0\le j\le q+1}\{j(a+1)-\lfloor\frac{ja+i}{q+1}\rfloor \}\\
		&=i(a+1)+a.\min\{-\lfloor\frac{i}{q+1}\rfloor, \min_{1\le j\le q+1}\{j(a+1)-\lfloor\frac{ja+i}{q+1}\rfloor \}\}\\
		&=i(a+1)-\lfloor\frac{i}{q+1}\rfloor.a.
	\end{align*}
	It is clear that the sequence $\{\omega_i\}_{i=1}^{a-1}$ is a strict increasing sequence. Therefore, $$\max_{1\le i\le a-1}\omega_i=\omega_{a-1}=a^2-1-\lfloor\frac{a-1}{q+1}\rfloor.a$$
	Consequently, 
	$$\rmF(H)=a^2-a-1-\lfloor\frac{a-1}{q+1}\rfloor.a$$
\end{proof}
\begin{proposition}\label{prop37}
	For $a,d$ be positive integers with $1<d<a$ and
	$H=\left<a,a+1,a+d\right>$ a numerical semigroup. Suppose  $a\ge d^2-3d$ then for each $0\le i\le a-1$, we have \[
	\omega_i = i(a+1) -(d-1)a\left\lfloor \frac{i}{d}\right\rfloor.\]
Consequently, $$\rmF(H)=\max_{1<i<a}\left(i(a+1) -(d-1)a\left\lfloor \frac{i}{d}\right\rfloor\right)-a$$.
\end{proposition}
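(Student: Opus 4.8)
The plan is to substitute the data into Lemma~\ref{lem26}. Since $b=a+d$ with $1<d<a$, writing $b=aq+r$ with $1\le r\le a-1$ forces $q=1$ and $r=d$, whence $r-q=d-1$ and Lemma~\ref{lem26} gives, for $1\le i\le a-1$,
\[
\omega_i=i(a+1)+a\cdot\min_{0\le j\le d}\phi_i(j),\qquad
\phi_i(j):=j(a+1)-(d-1)\left\lfloor\frac{ja+i}{d}\right\rfloor .
\]
As $\phi_i(0)=-(d-1)\lfloor i/d\rfloor$, the claimed formula for $\omega_i$ (the case $i=0$ being trivial since $\omega_0=0$) reduces to showing that $\phi_i$ attains its minimum over $\{0,1,\dots,d\}$ at $j=0$.

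I would prove the stronger statement that $\phi_i(0),\phi_i(1),\dots,\phi_i(d)$ is non-decreasing. Indeed
\[
\phi_i(j+1)-\phi_i(j)=(a+1)-(d-1)N_j,\qquad
N_j:=\left\lfloor\frac{(j+1)a+i}{d}\right\rfloor-\left\lfloor\frac{ja+i}{d}\right\rfloor ,
\]
where $N_j$ counts the multiples of $d$ in the half-open interval $(ja+i,\,(j+1)a+i\,]$ of length $a$; since $\lceil a/d\rceil+1$ such multiples would be spread over a distance $d\lceil a/d\rceil\ge a$, impossible inside such an interval, we get $N_j\le\lceil a/d\rceil$ and hence $\phi_i(j+1)-\phi_i(j)\ge (a+1)-(d-1)\lceil a/d\rceil$. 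It remains to check this last quantity is $\ge 0$ when $a\ge d^2-3d$. If $d\mid a$ it equals $a/d+1>0$. If $d\nmid a$, write $a=dq'+r'$ with $1\le r'\le d-1$; then $dq'=a-r'\ge d^2-4d+1>d(d-4)$, so $q'\ge d-3$ (integrality of $q'$), giving $q'+r'\ge d-2$ and $(a+1)-(d-1)\lceil a/d\rceil=(a+1)-(d-1)(q'+1)=q'+r'+2-d\ge 0$. Either way $\phi_i$ is non-decreasing, so $\min_j\phi_i(j)=\phi_i(0)$ and $\omega_i=i(a+1)-(d-1)a\lfloor i/d\rfloor$.

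The Frobenius number then follows from $\rmF(H)=\max(\Ap(H,a))-a=\max_{0\le i\le a-1}\omega_i-a$ (Lemma~\ref{Aperyset}), together with the easy observation that $\omega_0=0<\omega_1=a+1<\omega_2$ (using $d\ge2$ and $a\ge3$), which shows the maximum is attained at some $i$ with $1<i<a$.

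The step I expect to be the real obstacle is this last estimate at the extreme case $a=d^2-3d$: there the uniform bound $\phi_i(j+1)-\phi_i(j)\ge(a+1)-(d-1)\lceil a/d\rceil$ is worth exactly $0$, and that it is not negative hinges on the fact that $d\mid a$ precisely when $a=d^2-3d$; this is why the argument must split on divisibility rather than run a single inequality, and it is also what makes the hypothesis $a\ge d^2-3d$ sharp for this proof.
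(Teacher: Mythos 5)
Your proof is correct, and it starts from the same place as the paper's (substitute $q=1$, $r=d$ into Lemma~\ref{lem26} and show that $j=0$ minimizes $\phi_i(j)=j(a+1)-(d-1)\lfloor(ja+i)/d\rfloor$ over $0\le j\le d$), but the key estimate is carried out differently. The paper compares $\phi_i(j)$ with $\phi_i(0)$ directly for each $j\ge 1$, using $\lfloor x\rfloor\le x$ and $\lfloor x\rfloor> x-1$ and then upgrading the resulting strict inequality by integrality to get $\phi_i(j)-\phi_i(0)\ge\lfloor (j(a+d)-d^2+2d)/d\rfloor\ge 0$. You instead prove the stronger statement that $j\mapsto\phi_i(j)$ is non-decreasing, by bounding the consecutive difference $(a+1)-(d-1)N_j$ through the observation that $N_j$ counts multiples of $d$ in a half-open interval of length $a$, hence $N_j\le\lceil a/d\rceil$, and then splitting on whether $d\mid a$. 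This buys something real: writing $a=dq'+r'$, your inequality $(a+1)-(d-1)\lceil a/d\rceil=q'+r'+2-d\ge 0$ only needs $q'+r'\ge d-2$ (or $d\mid a$), which is strictly weaker than $a\ge d^2-3d$ and matches the hypothesis of Theorem~\ref{mainthm}; the paper's single inequality, whose binding case is $j=1$, genuinely needs $a\ge d^2-3d$. One small inaccuracy in your closing commentary (not in the proof): at $a=d^2-3d$ one has $d\mid a$ and your bound equals $1+a/d=d-2>0$, not $0$; the case where the bound is exactly $0$ is $q'=d-3$, $r'=1$, i.e.\ $a=d^2-3d+1$. The deduction of $\rmF(H)$ from $\max\Ap(H,a)-a$, with the check that the maximum is attained for some $1<i<a$, is fine.
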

\begin{proof}
	We have by Lemma \ref{lem26} that
	\begin{align*}
		\omega_i=i(a+1)+a.\min_{0\le j\le d}\{j(a+1)-4\lfloor\frac{ja+i}{d}\rfloor \}.
	\end{align*}
	We fix each $1\le j\le d$ and observe that
	\begin{align*}
		j(a+1)-(d-1).\lfloor\frac{ja+i}{d}\rfloor+(d-1)\lfloor\frac{i}{5}\rfloor
		&>j(a+1)-(d-1)\left(\dfrac{ja+i}{d}-\dfrac{i}{d}+1\right)\\
		&= \dfrac{j(a+d)-d^2+d}{d}.
	\end{align*}
	It yields
	\begin{align*}
		j(a+1)-(d-1).\lfloor\frac{ja+i}{d}\rfloor+(d-1)\lfloor\frac{i}{5}\rfloor
		&\ge\lfloor\dfrac{j(a+d)-d^2+d}{d}\rfloor+1\\
		&=\lfloor\dfrac{j(a+d)-d^2+d}{d}+1\rfloor\\
		&=\lfloor\dfrac{j(a+d)-d^2+2d}{d}\rfloor.
	\end{align*}
	Hence, if $a\ge d^2-3d$ then $\lfloor\dfrac{j(a+d)-d^2+2d}{d}\rfloor\ge 0$ which implies that $$	j(a+1)-(d-1).\lfloor\frac{ja+i}{d}\rfloor\ge -(d-1)\lfloor\frac{i}{d}\rfloor.$$ Therefore if $a\ge d^2-3d$ then
	\[
	\omega_i = i(a+1) -(d-1)a\left\lfloor \frac{i}{d}\right\rfloor .
	\]
\end{proof}
As a consequence of Proposition \ref{prop37}, we have the following statements.
\begin{proposition}\label{prop38}
	For $a\ge 7$ be an integer and
	$H=\left<a,a+1,a+5\right>$. Then for each $0\le i\le a-1$, we have 	\[
	\omega_i = i(a+1) - 4a\left\lfloor \frac{i}{5}\right\rfloor .\]
Moreover, 	\[
\rmF(H)=\begin{cases}
	\lfloor\dfrac{a}{5}\rfloor(a+5)+2a+3, & \text{if } a\equiv 4\pmod 5,\\[1ex]
	\lfloor\dfrac{a}{5}\rfloor(a+5)+2a-1, & \text{if } a\not\equiv 4\pmod 5.
\end{cases}
\]
\end{proposition}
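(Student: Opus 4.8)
The plan is to deduce the formula for $\omega_i$ from Proposition~\ref{prop37} specialized to $d=5$, and then to read off $\rmF(H)=\max_{1\le i\le a-1}\omega_i-a$ by elementary monotonicity. For $a\ge 10$ the displayed formula for $\omega_i$ is exactly Proposition~\ref{prop37} with $d=5$, since then $a\ge d^2-3d=10$. For the three remaining values $a\in\{7,8,9\}$ I would rerun the argument of Proposition~\ref{prop37}: by Lemma~\ref{lem26} (applied with $b=a+5$, so $q=1$, $r=5$ and $r-q=4$) it suffices to show
\[
j(a+1)-4\Big\lfloor\tfrac{ja+i}{5}\Big\rfloor\ \ge\ -4\Big\lfloor\tfrac{i}{5}\Big\rfloor\qquad\text{for }1\le j\le 5,\ 0\le i\le a-1,
\]
so that the minimum in Lemma~\ref{lem26} is attained at $j=0$. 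Using $\lfloor\tfrac{ja+i}{5}\rfloor\le\lfloor\tfrac{i}{5}\rfloor+\lceil\tfrac{ja}{5}\rceil$, this reduces to checking $j(a+1)\ge 4\lceil ja/5\rceil$ for $1\le j\le 5$, which is a quick finite verification for $a=7,8,9$ (for $a=7,\ j=1$ it is the borderline equality $8=4\cdot 2$). Alternatively one may just tabulate $\omega_0,\dots,\omega_{a-1}$ directly from Lemma~\ref{lem26} in these three cases.

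\emph{Locating the maximum.} Write $\omega_i=i(a+1)-4a\lfloor i/5\rfloor$. Two observations do the work: (i) on each block $5k\le i\le 5k+4$ the value $\omega_i$ is strictly increasing in $i$, since the coefficient $a+1$ is positive; and (ii) consecutive block tops satisfy $\omega_{5(k+1)+4}-\omega_{5k+4}=5(a+1)-4a=a+5>0$, so $\omega_{5k+4}$ is increasing in $k$. Set $m:=\lfloor(a-1)/5\rfloor$; since $a\ge 7$ we have $m\ge 1$, so $5m-1$ is a legitimate index in $\{1,\dots,a-1\}$. Observations (i)--(ii) show that $\max_{1\le i\le a-1}\omega_i$ is attained either at $i=a-1$ (the last, possibly incomplete, block) or at $i=5m-1$ (the top of the last complete block below it); when $a\equiv 0\pmod 5$ we have $a-1=5m+4$, so $\omega_{a-1}$ is itself a block top and is the maximum.

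\emph{The comparison.} A direct computation gives
\[
\omega_{a-1}-\omega_{5m-1}=(a-5m)(a+1)-4a,
\]
and since $a-5m$ equals $a\bmod 5$ when $a\not\equiv 0\pmod 5$ and equals $5$ when $a\equiv 0\pmod 5$, this quantity is positive exactly when $a\equiv 0$ or $4\pmod 5$ and negative for $a\equiv 1,2,3\pmod 5$. Hence $\max_{1\le i\le a-1}\omega_i$ equals $\omega_{a-1}$ if $a\equiv 0,4\pmod 5$ and equals $\omega_{5m-1}=m(a+5)+3a-1$ if $a\equiv 1,2,3\pmod 5$. Substituting into $\rmF(H)=\max_i\omega_i-a$, and using $\lfloor a/5\rfloor=m$ when $a\not\equiv 0\pmod 5$, respectively $\lfloor a/5\rfloor=m+1$ and $\omega_{a-1}=(a-1)(a+1)-4am$ when $a\equiv 0\pmod 5$, a routine simplification (e.g.\ for $a\equiv 4$, writing $\lfloor a/5\rfloor=(a-4)/5$ one gets $(a-1)(a+1)-4a\lfloor a/5\rfloor-a=\lfloor a/5\rfloor(a+5)+2a+3$) yields $\rmF(H)=\lfloor a/5\rfloor(a+5)+2a+3$ in the case $a\equiv 4\pmod 5$ and $\rmF(H)=\lfloor a/5\rfloor(a+5)+2a-1$ in all other cases.

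I do not expect a genuine obstacle: the substance is the specialization of Proposition~\ref{prop37}, together with the minor extra check pushing the range down to $a\ge 7$, plus the two monotonicity facts (i)--(ii). The only points that need a little care are confirming that $5m-1$ lies in $\{1,\dots,a-1\}$ (this is exactly where $a\ge 7$ is used) and handling the residue $a\equiv 0\pmod 5$, where the last block ends precisely at $a-1$, so the maximizer must be tracked separately from the $a\equiv 1,2,3$ cases even though the final formula coincides.
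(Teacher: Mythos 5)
Your proposal is correct and follows essentially the same route as the paper: specialize Proposition \ref{prop37} to $d=5$ for $a\ge 10$, treat $a\in\{7,8,9\}$ separately, and then locate the maximum of $\omega_i=i(a+1)-4a\lfloor i/5\rfloor$ by an elementary case analysis on $a\bmod 5$ (your block-monotonicity bookkeeping versus the paper's residue-class bound is a cosmetic difference). The only substantive divergence is that where the paper dismisses $a\in\{7,8,9\}$ with ``a direct computation shows,'' you supply an actual argument via Lemma \ref{lem26} and the bound $\lfloor (ja+i)/5\rfloor\le\lfloor i/5\rfloor+\lceil ja/5\rceil$, which is a welcome tightening.
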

\begin{proof}
If $a\ge 10$ then by Proposition \ref{prop37} we have that 
\[\omega_i = i(a+1) - 4a\left\lfloor\frac{i}{5}\right\rfloor,\]
for all $0\le i\le a-1$.

Moreover, a direct computation shows that this equality still holds true for each $7\le a\le 9$. This conclude the equalities for $\omega_i$'s as desired.

Now to determine the Frobenius of $H$ we need to find the largest value among $\omega_i$'s with $1\le i\le a-1$. Write $a=5m+s$ with $m>0, s\in\{0,1,2,3,4\}$. For each $1\le i\le a-1$,
we write $i=5k+r$ with $r\in\{0,1,2,3,4\}$. Then
\begin{align*}
	w_i=(5k+r)(a+1)-4ak
	&= k(a+5)+r(a+1)\\&=\dfrac{i-r}{5}(a+5)+r(a+1)\le \dfrac{a-1-r}{5}(a+5)+r(a+1).
\end{align*}
Note that $0\le r,s\le 4$, we get that
	\[
\left\lfloor\frac{a-1-r}{5}\right\rfloor=\left\lfloor m+\frac{s-1-r}{5}\right\rfloor=
	\begin{cases}
		m, & r\le s-1,\\
		m-1, & r\ge s.
	\end{cases}
	\]
Therefore, 	
\[\omega_i\le	\begin{cases}
	m(a+5)+r(a+1), & \text{ if }r\le s-1,\\
	(m-1)(a+5)+r(a+1), & \text{ if } r\ge s.
\end{cases}
\]
We consider the following two cases:\\
{\bf Case $s=4:$} In this case, we have for each $i=5k+r$ that
\[\omega_i\le	\begin{cases}
	m(a+5)+r(a+1), & \text{ if }r\le 3,\\
	(m-1)(a+5)+r(a+1), & \text{ if } r=4.
\end{cases}
\]
Hence for all $1\le i\le a-1$, we have $$\omega_i\le \max_{0\le j\le 3}\{m(a+5)+j(a+1),	(m-1)(a+5)+4(a+1)\}=m(a+5)+3(a+1).$$
The equality holds true if $i=5m+3=a-1$.\\
{\bf Case $0\le s\le 3:$} 
A similar argument as the $s=4$ case, we have for all $1\le i\le a-1$ that $$\omega_i\le (m-1)(a+5)+4(a+1).$$
	The equality holds true if $i=5(m-1)+4$.\\
	Therefore,
	\[
	\max_{0\le i\le a-1} w_i =
	\begin{cases}
		m(a+5)+3(a+1), & \text{ if } s=4,\\
		(m-1)(a+5)+4(a+1), & \text{ if } s\neq 4,
	\end{cases}
	\]
which implies that
$$\rmF(H)=	\max_{0\le i\le a-1} w_i-a=\begin{cases}
	\lfloor\dfrac{a}{5}\rfloor(a+5)+2a+3, & \text{if } a\equiv 4\pmod 5,\\[1ex]
	\lfloor\dfrac{a}{5}\rfloor(a+5)+2a-1, & \text{if } a\not\equiv 4\pmod 5.
\end{cases}$$
We therefore have completed the proof.
\end{proof}

In general, it is difficult to determine an explicit formula for $\rmF(H)$. However, under certain conditions, we can derive an explicit formula for it.

\begin{thm}\label{mainthm} Let $a,d$ be positive integers with $a\ge 3, d\ge 2$ and $ \gcd(a,d)=1$. Let  $H=\left<a,a+1,a+d\right>$ and $R=k[[H]]$ its numerical semigroup ring. Denote by $I$ the defining ideal of $R$. We write $a=dq+r \text{ with } 1\le r\le d-1.$  Suppose $q+r\ge d-2$, then
	$$I=\rmI_2\left( {\begin{array}{*{20}{c}}
			{{y^r}}&z&{{x^{q + r + 2 - d}}} \\ 
			{{x^{d - 1}}}&{{y^{d - r}}}&z^q 
	\end{array}} \right).$$
	In addition, if $H$ is not symmetric then the following assertions hold true.
	\begin{enumerate}
		\item[1)] If $1\le r\le d-2$ then $\rmF(H)=\dfrac{1}{d}a^2+\dfrac{d^2-2d-r}{d}a-(r+1)$.
		\item[2)] If $r=d-1$ then $\rmF(H)=\dfrac{1}{d}a^2+\dfrac{d^2-3d+1}{d}a-1.$
	\end{enumerate}
\end{thm}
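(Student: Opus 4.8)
The plan is to first establish the description of the defining ideal $I$ as the ideal of $2\times 2$ minors of the stated $2\times 3$ matrix, and then derive the Frobenius number formula from it via Proposition \ref{Fro}. For the first part, I would verify directly that the three maximal minors of
$$\begin{pmatrix} y^r & z & x^{q+r+2-d} \\ x^{d-1} & y^{d-r} & z^q \end{pmatrix}$$
lie in $I = \Ker\varphi$; concretely, one checks the three binomial relations
$$r n_2 + (d-1)n_1 = ?,\qquad z\text{-relation},\qquad (q+r+2-d)n_1 + q n_3 = ?$$
by substituting $n_1 = a$, $n_2 = a+1$, $n_3 = a+d$ and using $a = dq+r$. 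For instance, $(d-1)n_1 + r n_2 = (d-1)a + r(a+1) = (d-1+r)a + r$, while $z^{q+1}$ corresponds to $(q+1)n_3 = (q+1)(a+d) = (q+1)a + (q+1)d$; the hypothesis $q + r \ge d - 2$ is exactly what guarantees the exponent $q + r + 2 - d$ is nonnegative, so the matrix makes sense, and the arithmetic $a = dq + r$ forces these weighted-degree identities. Once the three minors are seen to be genuine elements of $I$, the fact that they generate all of $I$ follows from Herzog's structure theorem $(\sharp)$ for non-symmetric embedding dimension three semigroups: the minors have the binomial shape $x^{\alpha+\alpha'} - y^{\beta'}z^\gamma$ etc., so one reads off $(\alpha, \beta, \gamma, \alpha', \beta', \gamma')$ from the matrix entries and checks these are the minimal generators --- the key point being that no proper sub-relation can occur, which one verifies by a minimality/degree count (the exponents are forced to be the smallest possible, again using $a = dq + r$ and $1 \le r \le d-1$). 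I expect this matching of parameters to be the main obstacle: one must confirm that the naive candidate exponents coming from the matrix really are the minimal ones, i.e. that $H$ is minimally generated and the Herzog matrix is the reduced one, rather than accidentally producing a redundant or non-minimal relation.

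Having identified $(\alpha, \beta, \gamma, \alpha', \beta', \gamma')$ --- reading the matrix $(\sharp)$ against $\begin{pmatrix} y^{\beta'} & z^{\gamma'} & x^{\alpha'} \\ x^\alpha & y^\beta & z^\gamma \end{pmatrix}$ gives $\beta' = r$, $\gamma' = 1$, $\alpha' = q+r+2-d$, $\alpha = d-1$, $\beta = d-r$, $\gamma = q$ --- I would then feed these into Proposition \ref{Fro}. The proposition splits on the sign of $\beta' n_2 - \alpha n_1 = r(a+1) - (d-1)a = (r - d + 1)a + r$. Since $1 \le r \le d-1$, we have $r - d + 1 \le 0$, so $\beta' n_2 \le \alpha n_1$ with equality precisely when $r = d-1$ and $a = \ldots$; generically case 2) of Proposition \ref{Fro} applies (or the boundary case, which one treats by hand), giving
$$\rmF(H) = \alpha n_1 + (\gamma + \gamma')n_3 - (n_1 + n_2 + n_3) = (d-1)a + (q+1)(a+d) - (3a + d + 1).$$
Substituting $q = (a - r)/d$ and simplifying should yield $\frac{1}{d}a^2 + \frac{d^2 - 2d - r}{d}a - (r+1)$ when $1 \le r \le d-2$, and the separate computation when $r = d-1$ --- where the sign $\beta' n_2 - \alpha n_1 = r - d + 1 + (r-d+1)a/1$... more carefully $= a(r-d+1) + r = -a + (d-1) = d - 1 - a < 0$ for $a \ge d$, so actually case 2) still applies but the value $(q+r+2-d) = q+1$ changes the bookkeeping --- gives $\frac{1}{d}a^2 + \frac{d^2 - 3d + 1}{d}a - 1$. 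This is a routine but somewhat delicate substitution; the two cases differ because the exponent $\alpha' = q + r + 2 - d$ behaves differently at $r = d-1$ (it equals $q+1$) versus $r \le d - 2$, and correspondingly the minimality analysis of the matrix must be checked in each regime.

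The logical skeleton is therefore: (i) write down the candidate Herzog matrix and verify its minors are in $I$ using $a = dq+r$; (ii) invoke Herzog's theorem plus a minimality argument to conclude these minors generate $I$ and read off the six parameters; (iii) check which case of Proposition \ref{Fro} applies by computing the sign of $\beta' n_2 - \alpha n_1$; (iv) substitute into the Frobenius formula and simplify, handling $r \le d-2$ and $r = d-1$ separately. The hypothesis $q + r \ge d - 2$ is used in step (i) to ensure $\alpha' \ge 0$, and (together with $\gcd(a,d) = 1$, which makes "$H$ not symmetric" a clean dichotomy) it is what makes the structure theorem applicable. I anticipate the genuine mathematical content to be concentrated in step (ii): showing the displayed matrix is \emph{the} defining matrix and not merely \emph{a} matrix whose minors lie in $I$ --- this likely requires either an explicit Gröbner-basis / standard-basis computation, or a comparison of the length $\ell_S(S/I_2(M))$ with $\ell_R(R/\varphi(\text{stuff}))$, or a direct verification that the Eagon--Northcott / Hilbert--Burch resolution coming from this matrix has the right Betti numbers (three generators, two first syzygies) forcing it to be the minimal one. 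Everything downstream of that identification is arithmetic.
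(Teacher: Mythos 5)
Your skeleton matches the paper's: verify the minors lie in $I$, prove they generate, read off the Herzog parameters, and feed them into Proposition \ref{Fro}. For the generation step, one of the strategies you list --- a length comparison --- is exactly what the paper does: it computes $(x)+\rmI_2(M)=(x,y^d,y^rz^{q},z^{q+1})$, checks $\ell_S\bigl(S/((x)+\rmI_2(M))\bigr)=(q+1)r+q(d-r)=a=\ell_S\bigl(S/((x)+I)\bigr)$, deduces $(x)+\rmI_2(M)=(x)+I$, and concludes $I=\rmI_2(M)$ by Nakayama. Your preferred route, ``invoke Herzog's structure theorem plus a minimality argument,'' is not a proof as stated: Herzog's theorem only guarantees that $I$ has the shape $(\sharp)$ for \emph{some} parameters, and identifying those with the exponents in the displayed matrix is precisely the content to be established. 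So step (ii) remains a genuine gap in your write-up, though one that is fixable along the lines you yourself list.

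The more serious problem is a sign error in step (iii). For $r=d-1$ you compute $\beta'n_2-\alpha n_1=a(r-d+1)+r=-a+(d-1)<0$ and conclude that case 2) of Proposition \ref{Fro} ``still applies.'' But with $r=d-1$ one has $r-d+1=0$, so $\beta'n_2-\alpha n_1=r=d-1>0$: this is case 1), giving $\rmF(H)=\beta'n_2+(\gamma+\gamma')n_3-(n_1+n_2+n_3)$, which is what the paper uses to obtain $\tfrac{1}{d}a^2+\tfrac{d^2-3d+1}{d}a-1$. Using case 2) instead yields a value smaller by $\beta'n_2-\alpha n_1=d-1$, i.e.\ the wrong Frobenius number; your remark that the ``bookkeeping'' of $\alpha'=q+1$ absorbs the discrepancy does not help, since $\alpha'$ does not appear in either formula of Proposition \ref{Fro}. (Your treatment of $1\le r\le d-2$ is correct: there $r(a+1)\le(d-2)(a+1)<(d-1)a$ once $a>d$, so case 2) genuinely applies.)
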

\begin{proof}
	We consider the ideal 
	$$P=\rmI_2\left( {\begin{array}{*{20}{c}}
			{{y^r}}&z&{{x^{q + r + 2 - d}}} \\ 
			{{x^{d - 1}}}&{{y^{d - r}}}&z^q 
	\end{array}} \right)\subseteq S.$$
	By the definition of $\varphi$, it is clear that $P\subseteq I$. For the reverse conclusion, we observe that
	$$(x)+P=(x)+\rmI_2\left( {\begin{array}{*{20}{c}}
			{{y^r}}&z&0 \\ 
			0&{{y^{d - r}}}&z^q
	\end{array}}\right)=(x,y^d,y^rz^{q}, z^{q+1}).$$
	So, 
	\begin{align*}
		\ell_S\left(\dfrac{S}{(x)+P}\right)&=\dim_k\frac{k[[x,y,z]]}{(x,y^d,y^rz^{q}, z^{q+1})}\\
		&=\dim_k\frac{k[[y,z]]}{(y^d,y^rz^{q}, z^{q+1})}\\
		&=(q+1)r+q(d-r)\\
		&=qd+r\\
		&=a.
	\end{align*}
	Moreover, because $R\cong S/I$ we have $\ell_S\left(\dfrac{S}{(x)+I}\right)=\ell_R\left(\dfrac{R}{(t^a)}\right)=a.$
	This implies that  $$\ell_S\left(\frac{S}{(x)+P}\right)=\ell_S\left(\frac{S}{(x)+I}\right)$$ which yields $(x)+P=(x)+I$. Hence $I/P=x(I/P)$. Hence, by Nakayama's lemma (see the proof of $(2)\Rightarrow(4)$ in \cite[Theorem 2]{Ki20}), we get that $I=P$. Therefore
	\begin{equation*}
		\tag{*}	I=\rmI_2\left( {\begin{array}{*{20}{c}}
				y^r&z&x^{q + r + 2 - d}\\ 
				x^{d - 1}&y^{d - r}&z^{q} 
		\end{array}} \right).
	\end{equation*}

Now we assume that $H$ is not symmetric. We then have $q>0$ so that $a>d$.\\	{\bf Proof of 1) and 2)}. By the form of $I$ in $(*)$, the parameters in $(\sharp)$ are given by
	$$\alpha=d-1,\beta=d-r,\gamma=i,\alpha'=q+r+2-d,\beta'=r, \text{ and } \gamma'=1.$$
	
	If $r=d-1$ then $\beta'(a+1)=r(a+1)>(d-1)a=\alpha a$. Hence, by  Proposition \ref{Fro}, we have 
	\begin{align*}
		\rmF(H)&=\beta'(a+1) + (\gamma + \gamma')(a+d)-(a + a+1 + a+d)\\
		&=(d-1)(a+1)+(\frac{a-(d-1)}{d}+1)(a+d)-(3a+1+d)\\
		&=\frac{1}{d}a^2+\frac{d^2-3d+1}{d}a-1.
	\end{align*}
	
	If $1\le r\le d-2$ then since $a>d$, we have $\beta'(a+1)=r(a+1)\le (d-2)(a+1)<(d-1)a=\alpha a$. Hence, again by Proposition \ref{Fro} we get
	\begin{align*}
		\rmF(H)&=\alpha a + (\gamma + \gamma')(a+d)-(a + a+1 + a+d)\\
		&=(d-1)a+(\frac{a-r}{d}+1)(a+d)-(3a+1+d)\\
		&=\frac{1}{d}a^2+\frac{d^2-2d-r}{d}a-(r+1).
	\end{align*}
\end{proof}
\begin{cor}
Suppose that $H$ is not symmetric and one of the following conditions hold:
\begin{enumerate}
	\item[1)] $2\le d\le 4$.
	\item[2)]  $d\ge 5$ and $d-3\le r\le d-1$.
	\item[3)] $a>d^2-3d$.
\end{enumerate}
Then $\rmF(H)$ is given in Theorem \ref{mainthm}.
\end{cor}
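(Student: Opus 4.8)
The plan is to verify that in each of the three listed cases the hypothesis $q+r\ge d-2$ of Theorem~\ref{mainthm} is automatically satisfied, so that the formulas for $\rmF(H)$ recorded there apply verbatim. Throughout we keep the notation $a=dq+r$ with $1\le r\le d-1$; note that $d\ge 2$ and $\gcd(a,d)=1$ force $q\ge 0$, and since $H$ is not symmetric we in fact have $q\ge 1$ (as observed inside the proof of Theorem~\ref{mainthm}), hence $a>d$.

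First I would dispose of case~(1). If $d=2$ then $r=1=d-1$ and $q+r\ge 1\ge 0=d-2$; if $d=3$ then $q+r\ge 1=d-2$ already because $r\ge 1$; if $d=4$ then $q+r\ge q+1$, and $q\ge 1$ gives $q+r\ge 2=d-2$. So for $2\le d\le 4$ the inequality $q+r\ge d-2$ is immediate from $q\ge 1$, $r\ge 1$. Next, case~(2): if $d-3\le r\le d-1$ then $q+r\ge 1+(d-3)=d-2$, again using only $q\ge 1$. These two cases are essentially bookkeeping.

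The substantive case is~(3). Here I would argue by contradiction: suppose $q+r\le d-3$. Then in particular $q\le d-3-r\le d-4$ and $r\le d-3$. From $a=dq+r$ and these bounds I would estimate
\begin{align*}
a = dq+r \le d(d-3-r)+r = d^2-3d-(d-1)r \le d^2-3d-(d-1)
\end{align*}
using $r\ge 1$ in the last step, whence $a\le d^2-3d-(d-1)<d^2-3d$, contradicting the assumption $a>d^2-3d$. (A slightly sharper bookkeeping: $a\le d(d-3)-( d-1)r+r\cdot 0$; either way the point is that $q+r\le d-3$ pins $a$ below $d^2-3d$.) Hence $q+r\ge d-2$ in case~(3) as well. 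In all three cases Theorem~\ref{mainthm} applies, and $\rmF(H)$ is given by the stated formulas, distinguishing $1\le r\le d-2$ from $r=d-1$.

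The only place requiring any care is the estimate in case~(3): one must use $q\ge 1$ (equivalently $a>d$, equivalently non-symmetry) together with the hypothesis $a>d^2-3d$ to rule out $q+r\le d-3$, and one should double-check the inequality chain is tight enough — in particular that the coefficient $(d-1)$ multiplying $r$ really does push $a$ strictly below $d^2-3d$. I expect no genuine obstacle beyond keeping the floor-free arithmetic straight.
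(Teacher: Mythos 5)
Your proof is correct and follows the same route as the paper: the paper's own argument simply notes that $q,r\ge 1$ (using non-symmetry for $q\ge 1$) and checks that each of the three hypotheses forces $q+r\ge d-2$, which is exactly what you verify, just with the arithmetic for case (3) written out in full. No gaps.
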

\begin{proof}
Note that $q,r\ge 1$. It is easy to see that the condition $q+r\ge d-2$ is satisfied if one of the given conditions hold. The statement is followed. 
\end{proof}
We also see that if $a>d^2-3d$ then Theorem \ref{mainthm} and Proposition \ref{prop37} imply that $\omega_{a-1}=\max\{\omega_{i}\mid 0\le i\le a-1\}$.
\begin{rem}
	In remaining cases, that is, $d\ge 5$, $a=qd+r$ with $1\le q+r\le d-3$. We have not yet an explicit formula for $\rmF(H)$.
\end{rem}
In the case $q=r=1$ we have the follwing.
\begin{lem}\label{remaining} Let
	$H=\left<d+1,d+2,2d+1\right>$ with $d\ge 5$ and $I$ is the defining ideal of $R=k[[H]]$. Then the following assertions hold true.
	\begin{itemize}
		\item[1)] if $d\equiv 0\pmod 3$ then $I=\rmI_2\begin{pmatrix}
			y&z^{\frac{d}{3}}&x\\
			x^2&y^{\frac{2d}{3}}&z
		\end{pmatrix}$ and $\rmF(H)=\dfrac{2}{3}d^2+\dfrac{1}{3}d-1$.
		\item[2)] if $d\equiv 1\pmod 3$ then $I=\rmI_2\begin{pmatrix}
			y&z^{\frac{d+2}{3}}&x^3\\
			1&y^{\frac{2d-2}{3}}&z
		\end{pmatrix}$ and $\rmF(H)=\dfrac{2}{3}d^2+\dfrac{2}{3}d-\dfrac{1}{3}$.
		\item[3)] if $d\equiv 2\pmod 3$ then $I=\rmI_2\begin{pmatrix}
			y&z^{\frac{d+1}{3}}&x^2\\
			x&y^{\frac{2d-1}{3}}&z
		\end{pmatrix}$  and $\rmF(H)=\dfrac{2}{3}d^2-\dfrac{2}{3}$.
	\end{itemize} 
\end{lem}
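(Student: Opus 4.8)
The plan is to treat the three congruence classes of $d$ modulo $3$ uniformly, following the template of the proof of Theorem \ref{mainthm} (note that here $a=d+1$ and $a+d=2d+1$, so $q=r=1$ and the hypothesis $q+r\ge d-2$ of Theorem \ref{mainthm} fails once $d\ge 5$, which is exactly why a separate argument is needed). In each case write $M$ for the displayed $2\times 3$ matrix and set $P=\rmI_2(M)\subseteq S$. Since $\varphi$ sends $x,y,z$ to $t^{d+1},t^{d+2},t^{2d+1}$ respectively, one checks directly that every $2\times 2$ minor of $M$ lies in $\Ker\varphi=I$; in particular the minor formed by the first and third columns is $x^3-yz$ in all three cases, and this lies in $I$ because $3(d+1)=(d+2)+(2d+1)$. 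Hence $P\subseteq I$.

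To upgrade this to $P=I$ I would reduce modulo $x$ exactly as in Theorem \ref{mainthm}. A direct computation shows that in each case $(x)+P$ has the form $(x,\,y^{s_1},\,yz,\,z^{s_2})$ for suitable exponents $s_1,s_2$ (when $d\equiv 1\pmod 3$ the extra minor $y^{(2d+1)/3}-z^{(d+2)/3}$ also occurs, but modulo $x$ it is absorbed by $y^{s_1}$ and $z^{s_2}$), so that
\[
\ell_S\!\left(\frac{S}{(x)+P}\right)=\dim_k\frac{k[[y,z]]}{(y^{s_1},yz,z^{s_2})}=s_1+s_2-1 .
\]
A short count of the exponents in each residue class gives $s_1+s_2-1=d+1$. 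On the other hand $\ell_S(S/((x)+I))=\ell_R(R/(t^{d+1}))=d+1$, so since $P\subseteq I$ we conclude $(x)+P=(x)+I$, hence $I/P=x(I/P)$; as $x$ is a nonzerodivisor on $R=S/I$, Nakayama's lemma (as in the proof of $(2)\Rightarrow(4)$ of \cite[Theorem 2]{Ki20}) forces $I=P$. This settles the three formulas for $I$.

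For $\rmF(H)$ I would split according to whether $M$ has a unit entry. If $d\equiv 0$ or $d\equiv 2\pmod 3$, all entries of $M$ lie in $\m$, so the Hilbert--Burch resolution of $S/I$ determined by $M$ is minimal; hence $\mu(I)=3$ and $H$ is not symmetric, and the parameters of $(\sharp)$ can be read off $M$. When $d\equiv 0\pmod 3$ one gets $\alpha=2$, so $\beta'n_2=d+2<2(d+1)=\alpha n_1$ and Proposition \ref{Fro}(2) applies; when $d\equiv 2\pmod 3$ one gets $\alpha=1$, so $\beta'n_2=d+2>d+1=\alpha n_1$ and Proposition \ref{Fro}(1) applies. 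Substituting the parameters and simplifying yields $\rmF(H)=\frac23 d^2+\frac13 d-1$ and $\rmF(H)=\frac23 d^2-\frac23$, respectively. If $d\equiv 1\pmod 3$, the entry $1$ of $M$ means $I=\rmI_2(M)$ is generated by the two binomials $x^3-yz$ and $y^{(2d+1)/3}-z^{(d+2)/3}$, which form a regular sequence, so $R$ is a complete intersection numerical semigroup ring; by the classical formula for the Frobenius number of a complete intersection,
\[
\rmF(H)=3(d+1)+\tfrac{(2d+1)(d+2)}{3}-\bigl((d+1)+(d+2)+(2d+1)\bigr)=\tfrac23 d^2+\tfrac23 d-\tfrac13 ,
\]
where $3(d+1)$ and $\tfrac{(2d+1)(d+2)}{3}$ are the $t$-degrees of the two defining binomials.

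The genuinely laborious part is the bookkeeping in the second paragraph: computing $(x)+P$ and verifying $s_1+s_2-1=d+1$ separately in the three congruence classes (and, when $d\equiv 1\pmod 3$, checking that the surviving binomial does not affect the colength). A minor additional point, needed to license Proposition \ref{Fro}, is the observation that $H$ is not symmetric when $d\equiv 0,2\pmod 3$, which follows from the minimality of the Hilbert--Burch resolution, i.e. from the absence of unit entries in $M$.
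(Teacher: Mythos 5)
Your proposal is correct and follows exactly the route the paper intends: the paper's own proof is the single sentence ``the argument is similar as in the proof of Theorem \ref{mainthm}'', and you have carried out precisely that argument (verify $P\subseteq I$ minor by minor, compare colengths of $(x)+P$ and $(x)+I$, conclude $I=P$ by Nakayama, then read off $\rmF(H)$ from the parameters via Proposition \ref{Fro}), including the necessary supplement for $d\equiv 1\pmod 3$, where the unit entry makes $I$ a complete intersection, $H$ symmetric, and Proposition \ref{Fro} inapplicable, so that the complete-intersection formula $\rmF(H)=\deg f_1+\deg f_2-(n_1+n_2+n_3)$ is the right substitute. One small correction to your bookkeeping in that case: $(x)+P=(x,\,yz,\,y^{(2d+1)/3}-z^{(d+2)/3},\,z^{(d+5)/3})$ is not a monomial ideal, and the surviving binomial \emph{does} change the count --- the monomial part alone would give colength $d+2$, and the identification $y^{(2d+1)/3}=z^{(d+2)/3}$ in the quotient cuts this down by one to the required $d+1$ --- so it is not ``absorbed'' but is essential to the equality $\ell_S(S/((x)+P))=d+1$.
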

\begin{proof}
	The argument is similar as in the proof of Theorem \ref{mainthm}.
\end{proof} 
\begin{cor}\label{CMofGr} Let $a,d$ be positive integer, $2\le d<a$ and $\gcd(a,d)=1$. Let $H=\left<a,a+1,a+d\right>$ is a numerical semigroup and $R=k[[H]]$ is numerical semigroup ring of $H$. Then if $q+r\ge d-2$ then the associated graded ring $\gr_\m(R)$ is Cohen-Macaulay.
\end{cor}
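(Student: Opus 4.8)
The plan is to read off the tangent cone $\gr_\m(R)$ directly from the presentation of the defining ideal furnished by Theorem~\ref{mainthm}. Write $S=k[[x,y,z]]$ and $\m_S=(x,y,z)$, so that $R=S/I$, $\m=\m_SR$, and $\gr_\m(R)=k[X,Y,Z]/I^{*}$, where $X,Y,Z$ are the degree-one classes of $x,y,z$ and $I^{*}$ is the ideal generated by the initial forms (least-degree homogeneous components) of the elements of $I$. Since $q+r\ge d-2$, Theorem~\ref{mainthm} gives $I=(f_1,f_2,f_3)$, the $2\times 2$ minors of the Herzog matrix, namely
$$f_1=y^d-x^{d-1}z,\qquad f_2=y^rz^q-x^{q+r+1},\qquad f_3=z^{q+1}-x^{q+r+2-d}y^{d-r}.$$
Comparing the orders (total degrees) of the two monomials in each binomial --- they coincide (both equal $d$) in $f_1$, whereas $q+r<q+r+1$ in $f_2$ and $q+1<q+2$ in $f_3$ --- one reads off the initial forms $f_1^{*}=Y^d-X^{d-1}Z$, $f_2^{*}=Y^rZ^q$ and $f_3^{*}=Z^{q+1}$. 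Hence $I^{*}\supseteq P$, where $P:=(Y^d-X^{d-1}Z,\,Y^rZ^q,\,Z^{q+1})$.

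Next I would cut $\gr_\m(R)$ down by the linear form $X$ and compare lengths. Since $\gr_\m(R)/X\gr_\m(R)$ is a quotient of $k[X,Y,Z]/(P+(X))\cong k[Y,Z]/(Y^d,\,Y^rZ^q,\,Z^{q+1})$, the monomial count already carried out in the proof of Theorem~\ref{mainthm} yields
$$\dim_k\bigl(\gr_\m(R)/X\gr_\m(R)\bigr)\le (q+1)r+q(d-r)=qd+r=a.$$
In particular $X$ is a nonzero parameter of the one-dimensional standard graded ring $\gr_\m(R)$, so $(X)$ is a minimal reduction of its homogeneous maximal ideal, and therefore
$$e_0(R)=e_0\bigl(\gr_\m(R)\bigr)=e_0\bigl(\gr_\m(R);(X)\bigr)\le\dim_k\bigl(\gr_\m(R)/X\gr_\m(R)\bigr),$$
where the last inequality is an equality if and only if $\gr_\m(R)$ is Cohen-Macaulay. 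Since $e_0(R)=a$, comparison with the previous display forces $\dim_k\bigl(\gr_\m(R)/X\gr_\m(R)\bigr)=a$; hence equality holds and $\gr_\m(R)$ is Cohen-Macaulay. (Equivalently, $P$ is the ideal of maximal minors of a $2\times 3$ matrix of height $2$, so $k[X,Y,Z]/P$ is Cohen-Macaulay by the Hilbert-Burch theorem, and the length identity above then forces $I^{*}=P$.)

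I do not expect a genuine obstacle here: the real content is Theorem~\ref{mainthm}, and the remaining ingredients --- that the initial forms of a generating set of $I$ generate a subideal of $I^{*}$, that multiplicity is preserved on passing to the associated graded ring, and that a one-dimensional (graded) ring $A$ with a parameter ideal $(x)$ satisfies $\dim_k(A/xA)\ge e_0(A)$ with equality exactly when $A$ is Cohen-Macaulay --- are all standard. The single point deserving a little care is the boundary case $q+r=d-2$, in which the entry $x^{q+r+2-d}$ of the Herzog matrix degenerates to the unit $1$: there one checks that the initial form of $f_3$ is still $Z^{q+1}$ (because $q+1<d-r=q+2$) and that the displayed length count for $k[Y,Z]/(Y^d,Y^rZ^q,Z^{q+1})$ is unchanged, being purely combinatorial.
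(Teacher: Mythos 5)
Your proof is correct, but it follows a different route from the paper's. The paper disposes of the corollary in two lines: from the determinantal presentation in Theorem~\ref{mainthm} it reads off the Herzog parameters and observes that $\beta+\beta'=d-r+r=d=(d-1)+1=\alpha+\gamma'$, then invokes Herzog's criterion from \cite{He81} for the Cohen--Macaulayness of the tangent cone of a $3$-generated semigroup ring. You instead re-prove the needed instance of that criterion from scratch: you compute the initial forms of the three minors, obtaining $I^{*}\supseteq(Y^d-X^{d-1}Z,\,Y^rZ^q,\,Z^{q+1})$, and then run a length-versus-multiplicity squeeze $a\ge\dim_k(\gr_\m(R)/X\gr_\m(R))\ge e_0(\gr_\m(R))=e_0(R)=a$ to force equality and hence Cohen--Macaulayness. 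All the steps check out: the orders of the two monomials in each binomial are correctly compared (including the boundary case $q+r=d-2$, where $x^{q+r+2-d}=1$ and the competing term of $f_3$ has degree $d-r=q+2>q+1$), the monomial count $\dim_k k[Y,Z]/(Y^d,Y^rZ^q,Z^{q+1})=qd+r=a$ is the same one used in the proof of Theorem~\ref{mainthm}, and the finiteness of $\dim_k(\gr_\m(R)/X\gr_\m(R))$ does guarantee that $X$ is a nonzero parameter whose ideal is a reduction of the graded maximal ideal. Your argument is longer but self-contained (no appeal to \cite{He81}) and yields as a byproduct the explicit tangent cone $\gr_\m(R)\cong k[X,Y,Z]/(Y^d-X^{d-1}Z,\,Y^rZ^q,\,Z^{q+1})$, which the paper's citation-based proof does not provide; the paper's approach buys brevity and situates the result within the standard criterion for such semigroup rings.
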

\begin{proof}
	By Theorem \ref{mainthm},  the defining ideal of $R=k[[H]]$ is given by
	$$I=\rmI_2\left( {\begin{array}{*{20}{c}}
			y^r&z&x^{q + r + 2 - d}\\ 
			x^{d - 1}&y^{d - r}&z^{q} 
	\end{array}} \right).$$
	This implies that $\beta+\beta'=d=\alpha+\gamma'$. Hence, by Herzog's criterion (see \cite{He81}) it follows that $\gr_\m(R)$ is Cohen-Macaulay.
\end{proof}
We close this section with a note that if $q+r<d-2$ then the associated graded ring $\gr_\m(R)$ may be not Cohen-Macaulay. For example, if $H=\left<d+1,d+2,2d+1\right>$ with $d\ge 5$ and assume that $H$ is not symmetric then $\gr_\m(R)$ is not Cohen-Macaulay. Indeed, by Lemma \ref{remaining}, the defining ideal $I$ of $R$ is either
$$\rmI_2\begin{pmatrix}
	y&z^{\frac{d}{3}}&x\\
	x^2&y^{\frac{2d}{3}}&z
\end{pmatrix} \text{ or }
\rmI_2\begin{pmatrix}
	y&z^{\frac{d+1}{3}}&x^2\\
	x&y^{\frac{2d-1}{3}}&z
\end{pmatrix}$$
which implies that $\beta+\beta'$ is greater than $\alpha+\gamma'$. This follows that $\gr_\m(R)$ is not Cohen-Macaulay.
\section{First Hilbert coefficients of numerical semigroup rings}

Let $H=\left<n_1,n_2,\ldots,n_e\right>$ be a numerical semigroup and $R=k[[H]]$ its numerical semigroup ring. Denote by $e_1(R)$ the first Hilbert coefficient of  $R$. In this section, we would like to find the formula for $e_1(R)$. For this purpose, we put $H'=\{c_1n_1+c_2(n_2-n_1)+c_3(n_3-n_1)+\cdots c_n(n_e-n_1)\mid  c_i\in\Z_{\ge 0},\, \forall i=1,2,\ldots,e\}$. Note that since $\gcd(n_1,n_2-n_1,n_3-n_1,...,n_e-n_1)=1$, $H'$ is a numerical semigroup generated by $n_1,n_2-n_1,n_3-n_1,...,n_e-n_1$. We call $H'$ the {\it blow-up} of $H$. The numerical semigroup ring $k[[H']]$ corresponds to the blow-up of $k[[H]]$.  
Let us recall the following result regarding the genus of $H$.
\begin{lem}[{\cite[Proposition 2.12]{RGS09}}]\label{genus}
	Let $a$ be a nonzero
	element of $H$. Then the genus of $H$ is given by
	$$g(H) =\frac{1}{a}\left(\sum_{w\in \Ap(H,a)} w\right)- \dfrac{(a-1)}{2}.$$
\end{lem}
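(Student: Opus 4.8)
The idea is to sort the natural numbers by their residue modulo $a$ and count the gaps of $H$ in each residue class separately. Fix the nonzero element $a\in H$ and, as before, for $0\le i\le a-1$ set $\omega_i=\min\{h\in H\mid h\equiv i\pmod a\}$, so that by Lemma \ref{Aperyset} we have $\Ap(H,a)=\{\omega_0,\omega_1,\ldots,\omega_{a-1}\}$. The key observation is that within the arithmetic progression $\{i,i+a,i+2a,\ldots\}$ of all nonnegative integers congruent to $i$, membership in $H$ is an ``up-set'': if $h\in H$ with $h\equiv i\pmod a$ then $h+a\in H$ as well, because $a\in H$. Hence the elements of $H$ in this class are exactly $\omega_i,\omega_i+a,\omega_i+2a,\ldots$, and therefore the elements of $\N\setminus H$ in this class are exactly $i,\,i+a,\,\ldots,\,\omega_i-a$ (here one uses that the least nonnegative integer congruent to $i$ is $i$ itself, since $0\le i\le a-1$, and that $\omega_i\ge i$).

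Counting, there are precisely $(\omega_i-i)/a$ gaps of $H$ in the residue class of $i$. Summing over all residues and using that every nonnegative integer lies in exactly one such class, I would then obtain
$$g(H)=|\N\setminus H|=\sum_{i=0}^{a-1}\frac{\omega_i-i}{a}=\frac{1}{a}\sum_{i=0}^{a-1}\omega_i-\frac{1}{a}\sum_{i=0}^{a-1}i.$$
Since $\{\omega_0,\ldots,\omega_{a-1}\}=\Ap(H,a)$ and $\sum_{i=0}^{a-1}i=\tfrac{a(a-1)}{2}$, this rearranges to the claimed identity
$$g(H)=\frac{1}{a}\left(\sum_{w\in\Ap(H,a)}w\right)-\frac{a-1}{2}.$$

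The only point that requires a line of care is the description of the gaps in each residue class, which is exactly where the definition of $\omega_i$ as the minimal element of $H$ in that class, together with the closure property $h\in H\Rightarrow h+a\in H$, is used; everything else is a routine reindexing and the elementary sum $\sum_{i=0}^{a-1}i=\binom{a}{2}$. There is no real obstacle here — the statement is classical and the argument above is essentially the one in \cite{RGS09}.
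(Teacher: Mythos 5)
Your argument is correct and complete: the paper itself gives no proof of this lemma, citing \cite[Proposition 2.12]{RGS09}, and your residue-class counting (using that $a\in H$ makes each class an up-set, so the gaps congruent to $i$ are exactly $i,i+a,\ldots,\omega_i-a$) is precisely the standard argument found there. Nothing further is needed.
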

The first Hilbert coefficient of  $R$ is deeply related to the genus of $H$ and its blow-up.
\begin{prop}\label{e1} There is an equality
	\begin{align*}
		e_1(R)=g(H)-g(H'),
	\end{align*}
	where $g(H), g(H')$ are genus of $H$ and $H'$, respectively.
\end{prop}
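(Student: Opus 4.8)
The plan is to compute both genera via Lemma~\ref{genus} and compare. First I would write, for the chosen nonzero element $a = n_1$ of both $H$ and $H'$,
\[
g(H) = \frac{1}{n_1}\sum_{w\in\Ap(H,n_1)} w - \frac{n_1-1}{2},\qquad
g(H') = \frac{1}{n_1}\sum_{w'\in\Ap(H',n_1)} w' - \frac{n_1-1}{2},
\]
so that $e_1(R) = g(H) - g(H') = \frac{1}{n_1}\left(\sum_{w\in\Ap(H,n_1)} w - \sum_{w'\in\Ap(H',n_1)} w'\right)$, the awkward $\frac{n_1-1}{2}$ terms cancelling. Thus the whole problem reduces to relating the two Ap\'ery sets with respect to the common multiplicity $n_1$.

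The key step is a bijection between $\Ap(H,n_1)$ and $\Ap(H',n_1)$ that tracks the difference in values. Write $H' = \langle n_1, m_2,\dots,m_e\rangle$ with $m_i = n_i - n_1$. For each residue class $i \bmod n_1$, let $\omega_i$ be the smallest element of $H$ in that class and $\omega_i'$ the smallest element of $H'$ in that class; these are exactly the Ap\'ery set elements by Lemma~\ref{Aperyset}. I would show that if $\omega_i' = \sum_j c_j m_j$ is a representation realizing the minimum in $H'$ (with $\sum_{j\ge 2} c_j$ as small as possible among such), then $\sum_j c_j n_j = \omega_i' + \left(\sum_{j\ge 2} c_j\right) n_1$ lies in $H$, is congruent to $i$, and in fact equals $\omega_i$; conversely a minimal representation of $\omega_i$ in $H$, after subtracting $n_1$ from each generator used, gives a representation of an element of $H'$ in class $i$ which one checks is $\omega_i'$. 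Granting this, $\omega_i = \omega_i' + \ell_i n_1$ where $\ell_i$ is the ``order'' of $\omega_i$ (the minimal number of generators in a representation, equivalently $\omega_i/n_1$ counted appropriately), and summing over $i$ gives
\[
\sum_i \omega_i - \sum_i \omega_i' = n_1 \sum_i \ell_i,
\]
whence $e_1(R) = \sum_i \ell_i$. One then identifies $\sum_i \ell_i$ with $\ell_R(R/\overline{t^{n_1}R}\cap\cdots)$ — more concretely, this is a standard description of $e_1$ for a one-dimensional Cohen--Macaulay semigroup ring, so it may suffice to cite it — but the cleanest route for the paper is to keep everything in terms of $g(H)$ and $g(H')$ and simply verify the identity $e_1(R) = g(H) - g(H')$ directly.

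Alternatively, and probably more in the spirit of the cited literature, I would invoke the known formula $e_1(R) = \sum_{w\in\Ap(H,n_1)} \lfloor w/n_1\rfloor$ (or an equivalent ``reduction number'' expression) together with the analogous triviality $g(H') $ counts gaps of the blow-up, and check that $\frac{1}{n_1}\sum_{w\in\Ap(H,n_1)} w - \frac{1}{n_1}\sum_{w'\in\Ap(H',n_1)} w' = \sum_{w\in\Ap(H,n_1)}\lfloor w/n_1\rfloor$ using $w = n_1\lfloor w/n_1\rfloor + (\text{residue})$ and the fact that the residues of $\Ap(H,n_1)$ and of $\Ap(H',n_1)$ are the same multiset $\{0,1,\dots,n_1-1\}$, so the residue sums agree and cancel. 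This is the formulation I would actually write up.

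The main obstacle is the Ap\'ery-set bijection: one must argue carefully that a \emph{minimal-length} representation in $H'$ transports to the \emph{true minimum} in $H$ (and vice versa), i.e. that passing between $H$ and its blow-up does not disturb which element of a residue class is smallest. This is where the definition of blow-up genuinely enters, and it requires showing that the minimal number of summands $\ell_i$ in a representation of $\omega_i$ is well-defined and matches the shift $(\omega_i - \omega_i')/n_1$; the inequality in one direction is immediate but the other direction needs the minimality of the generating set and a short extremality argument. Everything after that is bookkeeping with floors and finite sums.
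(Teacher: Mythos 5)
Your reduction of the identity to $e_1(R)=\frac{1}{n_1}\bigl(\sum_{w\in\Ap(H,n_1)}w-\sum_{w'\in\Ap(H',n_1)}w'\bigr)$ is fine, but both routes you offer for the remaining step rest on false claims. The Ap\'ery-set transport does not work: it is not true that a minimal-length representation of $\omega_i'$ in $H'$ lifts to $\omega_i$, nor that a minimal representation of $\omega_i$ descends to $\omega_i'$. Take $H=\langle 4,5,11\rangle$, so $H'=\langle 4,1,7\rangle=\N$. In the class $i=3$ one has $\omega_3=11=n_3$ (order $1$) and $\omega_3'=3$; descending $11=n_3$ gives $11-4=7\neq 3$, ascending $3=3m_2$ gives $3+3\cdot 4=15\neq 11$, and $(\omega_3-\omega_3')/n_1=2$ is not the order of $\omega_3$. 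The minimum of a residue class in $H'$ can be attained only from a non-minimal element of that class in $H$ (here $15=3\cdot 5$), so the ``short extremality argument'' you hope for does not exist. Your alternative is also wrong: $e_1(R)=\sum_{w\in\Ap(H,n_1)}\lfloor w/n_1\rfloor$ is not a known formula and already fails for $H=\langle 3,5\rangle$, where the sum is $4$ but $e_1=3$. The residue cancellation you invoke only yields $g(H)-g(H')=\sum_{w}\lfloor w/n_1\rfloor-\sum_{w'}\lfloor w'/n_1\rfloor$; the second sum vanishes only when $H'=\N$ (which does happen for the semigroups $\langle a,a+1,a+d\rangle$ studied later, but not for the general $H$ of this proposition).

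More fundamentally, neither version of your argument proves the link between the Hilbert coefficient $e_1(R)$ and the semigroup combinatorics; at exactly that point you propose to ``cite a standard description''. That link is the entire content of the proposition. The paper supplies it through the blow-up algebra: assuming the residue field infinite, $Q=(t^{n_1})$ is a minimal reduction of $\m$, and by \cite{GMP13} the ring $S=R[\m/t^{n_1}]$ satisfies $R\subseteq S\subseteq\overline{R}$ and $e_1(R)=\ell_R(S/R)$. One then computes $S=k[[t^{n_1},t^{n_2-n_1},\dots,t^{n_e-n_1}]]=k[[H']]$, so that $\ell_R(S/R)=\dim_k k[[t]]/k[[H]]-\dim_k k[[t]]/k[[H']]=g(H)-g(H')$. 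If you want to avoid this citation you would need to prove $e_1(R)=\ell_R(S/R)$ directly (e.g.\ via $e_1(R)=\sum_{n\ge 0}\ell_R(\m^{n+1}/Q\m^n)$), not replace it by an Ap\'ery-set bijection, which as shown above does not exist in the form you state.
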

\begin{proof}
	We may assume the residue $k$ is infinite. Then $Q=(t^{n_1})$ is a minimal reduction of $\m$. Let $r = \text{red}_Q(\m):=\min\{n\in \Z\mid \m^{n+1} = Q\m^n\}$ and $S:= R[\frac{\m}{(t^{n_1})}]$ in the total quotient ring $Q(R)$ of $R$. The fact that (see \cite{GMP13}) we have $S=\frac{\m^r}{(t^{rn_1})}$, $R\subseteq S\subseteq \overline R$ and $e_1=\ell_R(S/R)$, where $\overline R$ is the integral closure of $R$ in $Q(R)$. Observe that 
	\begin{align*}
		S&=R[\frac{\m}{(t^{n_1})}]=R[\frac{(t^{n_1},t^{n_2},\ldots,t^{n_e})}{(t^{n_1})}]=R[t^{n_2-n_1},t^{n_3-n_1},\ldots,t^{n_e-n_1}]\\
		&=k[[t^{n_1},\ldots,t^{n_e}, t^{n_2-n_1},t^{n_3-n_1},\ldots,t^{n_e-n_1}]]\\
		&=k[[t^{n_1},t^{n_2-n_1},t^{n_3-n_1},\ldots,t^{n_e-n_1}]].
	\end{align*}
	Hence,
	\begin{align*}
		e_1(R)&=\ell_R(S/R)=\dim_kk[[H']]/k[[H]]\\
		&=\dim_kk[[t]]/k[[H]]-\dim_kk[[t]]/k[[H']]\\
		&=g(H)-g(H').
	\end{align*}
\end{proof}
The Proposition \ref{e1} allows us to calculate the first Hilbert coefficient for some special numerical semigroups.
\begin{ex} Let  $H=\left<a,b\right>$ be a numerial semigroup with $a<b$. Then by Proposition \ref{e1} and \cite[Proposition 2.13]{RGS09}, we have
	\begin{align*}
		e_1(R)&=g(\left<a,b\right>)-g(\left<a,b-a\right>)\\
		&=\dfrac{ab-a-b+1}{2}-\dfrac{a(b-a)-a-(b-a)+1}{2}\\
		&=\dfrac{a(a-1)}{2}.
	\end{align*}
\end{ex}

\begin{ex}
	Let $a,d,n$ be positive integer such that $\gcd(a,d)=1$. Let $H=\left<a,a+d,\ldots,a+nd\right>$ be the numerical semigroup generated by an arithmetic sequence. The blow up of $H$ is $H'=\left<a,d\right>$ so that $g(H')=\dfrac{ad-a-d+1}{2}$. Let $R=k[[H]]$. We write $a = qn + r$ with $0\le r<n$. We define the subsets
	$A_i$ of $H$ for all $1\le i\le q$ as the following
	$$A_i:=\{ia + jd\mid  (i-1)n + 1\le j\le in\}.$$
	Then, thanks to \cite{Ma04}  we have
	\begin{enumerate}
		\item[(i)] if $r = 0$, then $\Ap(H, a) = \{0\}\cup A_1\cup A_2\cup\ldots\cup A_{q-1}\cup (A_q\setminus\{qa + qnd\}).$
		\item [(ii)] if $r = 1$, then
		$\Ap(H, a) = \{0\}\cup A_1\cup A_2\cup\ldots\cup A_{q-1}\cup A_q.$
		\item [(iii)] if $2\le r\le n$, then
		$\Ap(H, a) = \{0\}\cup A_1\cup A_2\cup\ldots\cup A_q\cup \{(q+ 1)a + jd \mid qn + 1\le j\le qn + r-1\}.$
	\end{enumerate}
	Hence, by Lemma \ref{genus}, we get that
	\begin{enumerate}
		\item[(1)] If $r = 0$ or $r=1$, then $e_1(R)=\dfrac{nq(q+1)}{2}$.
		\item [(2)] If $2\le r\le n$, then
		$e_1(R)=\dfrac{nq(q+1)}{2}+\dfrac{r-1}{2}(2a(q+1)+2qn+r)$.
	\end{enumerate}
\end{ex}
\begin{rem}\label{remark1}
	For each $0\le i\le n_1-1$, we let $\omega_i=\min\{h\in H\mid h\equiv i\pmod {n_1}\}$ and $\omega'_i=\min\{h\in H'\mid h\equiv i\pmod {n_1}\}$. Then it is clear that $\omega_0=\omega'_0=0$ and $\Ap(H,n_1)=\{\omega_0,\omega_1,\ldots,\omega_{n_1-1}\}$,  $\Ap(H',n_1)=\{\omega'_0,\omega'_1,\ldots,\omega'_{n_1-1}\}$. For each $i>0$ we write $\omega_i=k_in_1+i, \omega'_i=\ell_in_1+i$ for some positive integers $k_i,\ell_i$. Firstly, we see that $k_i-\ell_i\ge 1$ for all $1\le i\le n_1-1$. Indeed, for every $1\le i\le n_1-1$, let $\omega_i=c_1n_1+\cdots+c_en_e$ with $c_j\ge 0$. Since $\omega_i>0$, there exists $c_j\ge 1$. Hence $$\omega_i-n_1=(c_1+\cdots+c_e-1)n_1+c_2(n_2-n_1)+\cdots+c_e(n_e-n_1)\in H'.$$ Note that $\omega_i-n_1\equiv i\pmod {n_1}$. This implies that $\omega_i-n_1\ge \omega'_i$. Therefore $k_i-\ell_i\ge 1$.
	Now we observe that, by Lemma \ref{genus}, $g(H)=\sum_{i=1}^{n_1-1}k_i$ and $g(H')=\sum_{i=1}^{n_1-1}\ell_i$ which implies that $$e_1(R)=\sum_{i=1}^{n_1-1}(k_i-\ell_i).$$
\end{rem}
The equality in Remark \ref{remark1} can be used effectively to calculate $e_1(R)$ in the case where numerical semigroups has maximal embedding dimension. 
\begin{lem}
	Let $R=k[[H]]$, where $H=\left<n_1,n_2,\ldots,n_e\right>$  be a numerical semigroup of maximal embedding dimension, that is, $n_1=e$. Then $e_1=n_1-1$.
\end{lem}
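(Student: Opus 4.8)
The plan is to apply Proposition~\ref{e1}, so that it suffices to show $g(H)-g(H')=n_1-1$, and then to evaluate both genera via Lemma~\ref{genus}; everything hinges on describing the Apéry sets $\Ap(H,n_1)$ and $\Ap(H',n_1)$ explicitly. Recall the standard fact that $H$ has maximal embedding dimension precisely when $\Ap(H,n_1)=\{0,n_2,\dots,n_e\}$, equivalently when $x+y-n_1\in H$ for all nonzero $x,y\in H$ (see, e.g., \cite{RGS09}); indeed $\{n_2,\dots,n_e\}\subseteq\Ap(H,n_1)$ for any numerical semigroup, since if $n_j-n_1$ were in $H$ then $n_j$ would be a nontrivial sum of generators, contradicting minimality, and when $e=n_1$ these $e-1=n_1-1$ distinct elements must exhaust $\Ap(H,n_1)\setminus\{0\}$. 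Thus the first Apéry set is in hand.

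The core of the proof is to identify the Apéry set of the blow-up as
$$\Ap(H',n_1)=\{0,\,n_2-n_1,\,\dots,\,n_e-n_1\}.$$
Since $n_2,\dots,n_e$ occupy the $n_1-1$ nonzero residue classes modulo $n_1$, the set on the right consists of $e=n_1$ elements of $H'$ lying in pairwise distinct classes modulo $n_1$; because $\Ap(H',n_1)$ likewise has exactly $n_1$ elements, one in each class, it is enough to verify that $n_j-n_1\in\Ap(H',n_1)$ for each $j$, i.e.\ that $n_j-2n_1\notin H'$. For this I would first prove the auxiliary statement that $h+n_1\in H$ for every $h\in H'\setminus\{0\}$. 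Writing such an $h=c_1n_1+\sum_{l\ge 2}c_l(n_l-n_1)$ with $c_l\ge 0$ and $C=\sum_{l\ge 2}c_l$, one has $h+Cn_1=c_1n_1+\sum_{l\ge 2}c_ln_l\in H$; if $C=0$ this already gives $h+n_1\in H$, while if $C\ge 1$ a short induction using the maximal embedding dimension property shows that a sum of $C$ nonzero elements of $H$, minus $(C-1)n_1$, again lies in $H$ (each partial sum being $\ge n_1>0$, so the property may be reapplied), whence $\sum_{l\ge 2}c_ln_l-(C-1)n_1\in H$ and therefore $h+n_1=c_1n_1+\bigl(\sum_{l\ge 2}c_ln_l-(C-1)n_1\bigr)\in H$. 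Granting this, if we had $n_j-2n_1\in H'$ — necessarily a nonzero element, since $n_j\ne 2n_1$ by minimality — then $n_j-n_1\in H$, contradicting $n_j\in\Ap(H,n_1)$. Hence $n_j-2n_1\notin H'$, and the displayed description of $\Ap(H',n_1)$ follows.

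It then remains to substitute into Lemma~\ref{genus} with the element $n_1$: one gets $g(H)=\frac{1}{n_1}\sum_{l=2}^e n_l-\frac{n_1-1}{2}$ and $g(H')=\frac{1}{n_1}\bigl(\sum_{l=2}^e n_l-(e-1)n_1\bigr)-\frac{n_1-1}{2}$, so $g(H)-g(H')=e-1$, and Proposition~\ref{e1} together with $e=n_1$ yields $e_1(R)=n_1-1$. (In the notation of Remark~\ref{remark1} this amounts to $k_i-\ell_i=1$ for every $i$, consistent with $\omega_i=n_j$ and $\omega'_i=n_j-n_1$ for the appropriate $j$.)

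The main obstacle is precisely the auxiliary fact $h+n_1\in H$ for $h\in H'\setminus\{0\}$, i.e.\ pinning down $\Ap(H',n_1)$; everything else is bookkeeping with Lemma~\ref{genus} and Proposition~\ref{e1}. A shorter but less self-contained alternative is to invoke that maximal embedding dimension is equivalent to $\m^2=t^{n_1}\m$ (minimal multiplicity): then in the notation of the proof of Proposition~\ref{e1} the reduction number equals $1$, so $S=\m/(t^{n_1})$ and $e_1(R)=\ell_R(S/R)=\ell_R(R/(t^{n_1}))-1=e_0(R)-1=n_1-1$, bypassing the blow-up combinatorics altogether.
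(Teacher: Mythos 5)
Your proof is correct and follows essentially the same route as the paper's: both reduce to Proposition~\ref{e1} and hinge on iterating the maximal embedding dimension property $x+y-n_1\in H$ to show that $h+n_1\in H$ for nonzero $h\in H'$, the only difference being that you package this as an explicit identification of $\Ap(H',n_1)=\{0,n_2-n_1,\ldots,n_e-n_1\}$ and a direct genus computation, whereas the paper phrases it as the inequality $k_i-\ell_i\le 1$ combined with Remark~\ref{remark1}. Your closing one-line alternative via $\m^2=t^{n_1}\m$ and reduction number one is a genuinely shorter argument, but it is offered only as an aside.
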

\begin{proof}
	For each $1\le i\le n_1-1$ we write $\omega'_i=c'_1n_1+c'_2(n_2-n_1)\cdots+c'_e(n_e-n_1)$ with $c'_j\ge 0$. Since $\omega'_i>0$ and $\omega'_i\not\equiv0\pmod {n_1}$, there exists $c'_j\ge 1$ with $j\ge 2$. We may assume $\omega'_i=c'_1n_1+c'_2(n_2-n_1)\cdots+c'_k(n_k-n_1)$ with $c'_j\ge 1$ for all $2\le k\le e$. Then $(c'_1+1)n_1+c'_2(n_2-n_1)=[((c'_1+1)n_1+n_2-n_1)+n_2-n_1]+\cdots+n_2-n_1\in H$. Here note that since $H$ has maximal embedding dimension, $x+y-n_1\in H$ whenever $x,y\in H\setminus\{0\}$. Similarly, we easily get that $\omega'_i+n_1\in H$ for all $2\le k\le e$. This follows that $\omega'_i+n_1\ge \omega_i$, whence $k_i-\ell_i\le 1$. Combining Remark \ref{remark1} we obtain that $k_i-\ell_i= 1$. This concludes that $$e_1=\sum_{i=1}^{n_1-1}1=n_1-1.$$
\end{proof}


Now, let $a,d$ be positive integer and assume that $2\le d<a$. Let $n_1=a, n_2=a+1, n_3=a+d$ and $H=\left<a,a+1,a+d\right>$ be a numerical semigroup. Let $R=k[[H]]$ is the numerical semigroup ring associated to $H$ and $e_1(R)$ is the first Hilbert coefficient of $R$. Let $S = k[[x,y,z]]$ be the formal series ring and $\varphi :S \to R$ be a $k$-algebra homomorphism given by  $\varphi(x)=t^{n_1}, \varphi(y)=t^{n_2}, \varphi(z)=t^{n_3}$. We write $a=dq+r\text{ with } 0\le r\le d-1.$  
\begin{prop}\label{e1first}
 Suppose  $a\ge d^2-3d$. Then \[
e_1(R)=\frac{a(a-1)}{2}
-(d-1)\left(\frac{d\,q(q-1)}{2}+rq\right).\]
\end{prop}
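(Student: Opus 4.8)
The plan is to reduce the statement to a genus computation via Proposition \ref{e1}, and then to evaluate that genus using the explicit Ap\'ery set furnished by Proposition \ref{prop37}. First I would identify the blow-up $H'$ of $H=\left<a,a+1,a+d\right>$: it is generated by $a$, $(a+1)-a=1$ and $(a+d)-a=d-1$. Since $1\in H'$ we get $H'=\N$, so $g(H')=0$, and Proposition \ref{e1} then gives $e_1(R)=g(H)-g(H')=g(H)$. Thus the task becomes to prove $g(H)=\frac{a(a-1)}{2}-(d-1)\bigl(\frac{d\,q(q-1)}{2}+rq\bigr)$.

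Next I would invoke Lemma \ref{genus} with the element $a\in H$, which gives $g(H)=\frac1a\sum_{w\in\Ap(H,a)}w-\frac{a-1}{2}$. Since $a\ge d^2-3d$, Proposition \ref{prop37} describes the Ap\'ery set explicitly: by Lemma \ref{Aperyset} the elements of $\Ap(H,a)$ are exactly $\omega_0,\dots,\omega_{a-1}$ with $\omega_i=i(a+1)-(d-1)a\lfloor i/d\rfloor$. Substituting, the sum $\sum_{i=0}^{a-1}\omega_i$ splits as $(a+1)\sum_{i=0}^{a-1}i-(d-1)a\sum_{i=0}^{a-1}\lfloor i/d\rfloor$, where $\sum_{i=0}^{a-1}i=\frac{a(a-1)}{2}$.

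The remaining ingredient is the floor-sum $\sum_{i=0}^{a-1}\lfloor i/d\rfloor$. Writing $a=dq+r$ with $0\le r\le d-1$, the quantity $\lfloor i/d\rfloor$ equals $k$ on the $d$ consecutive indices $i=kd,\dots,(k+1)d-1$ for $0\le k\le q-1$, and equals $q$ on the $r$ indices $i=qd,\dots,qd+r-1=a-1$; hence this sum equals $d\cdot\frac{q(q-1)}{2}+rq$. Plugging everything into Lemma \ref{genus} and using $\frac{(a+1)(a-1)}{2}-\frac{a-1}{2}=\frac{a(a-1)}{2}$, I obtain the claimed closed form for $g(H)=e_1(R)$. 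I do not anticipate a genuine obstacle here: the only points needing care are checking that Proposition \ref{prop37} is being applied under precisely its stated hypothesis $a\ge d^2-3d$ (so the Ap\'ery formula holds for every residue $i$, including the computation of $\omega_{a-1}$), and the elementary bookkeeping in the floor-sum; the rest is direct substitution and simplification.
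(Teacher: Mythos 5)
Your proposal is correct and follows essentially the same route as the paper's own proof: reduce $e_1(R)$ to $g(H)$ via Proposition \ref{e1} (noting $H'=\N$), apply Lemma \ref{genus} with the Ap\'ery set from Proposition \ref{prop37}, and evaluate $\sum_{i=0}^{a-1}\lfloor i/d\rfloor=d\frac{q(q-1)}{2}+rq$ by the same counting argument. The only difference is cosmetic: you make explicit the step $g(H')=0$ that the paper leaves implicit.
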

\begin{proof}
By the assumption $a\ge d^2-3d$ and Proposition \ref{prop37}, we have for each $i=0,1,\dots,a-1$ that
	\[
	\omega_i = i(a+1) - (d-1)a\left\lfloor \frac{i}{d}\right\rfloor.
	\]
Hence, by Lemma \ref{genus} we get
	\[
	g(H)=\frac{1}{a}\sum_{i=0}^{a-1}\omega_i-\frac{a-1}{2}.
	\]
Observe that
	\[
	\sum_{i=0}^{a-1}\omega_i
	=(a+1)\sum_{i=0}^{a-1} i
	-(d-1)a\sum_{i=0}^{a-1}\left\lfloor \frac{i}{d}\right\rfloor=(a+1)\frac{a(a-1)}{2}-(d-1)a\sum_{i=0}^{a-1}\left\lfloor \frac{i}{d}\right\rfloor.
	\]

On the other hands, since $a = qd + r$, each integer $k=0,1,\dots,q-1$ occurs exactly $d$ times among the values
	$\lfloor i/d\rfloor$ for $i=0,\dots,a-1$, and the value $q$ occurs $r$ times.
	Hence,
	\[
	\sum_{i=0}^{a-1}\left\lfloor \frac{i}{d}\right\rfloor
	= d\sum_{k=0}^{q-1}k + rq
	= d\frac{q(q-1)}{2}+rq.
	\]

We obtain
	\[
	\sum_{i=0}^{a-1}\omega_i
	=(a+1)\frac{a(a-1)}{2}
	-(d-1)a\left(d\frac{q(q-1)}{2}+rq\right).
	\]
Hence, 
	\begin{align*}
e_1(R)=g(H)=\frac{a(a-1)}{2}
		-(d-1)\left(d\frac{q(q-1)}{2}+rq\right).
	\end{align*}

\end{proof}

\begin{prop}\label{Fro-Chern} Suppose that $q+r\ge d-2$ and $H$ is not symmetric. Then the following assertions hold true.
	\begin{itemize}
		\item[1)] If $1\le r\le d-2$ then $e_1(R)=\dfrac{1}{2d}(a^2+d(d-2)a-r(d-1)(d-r)$.
		\item[2)] If $r=d-1$ then $e_1(R)=\dfrac{1}{2d}(a^2+d(d-2)a-(d-1)^2)$.
	\end{itemize}
\end{prop}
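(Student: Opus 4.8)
The plan is to reduce $e_1(R)$ to the genus of $H$, then to read the Ap\'ery set $\Ap(H,a)$ off the defining ideal computed in Theorem~\ref{mainthm}, and finally to evaluate the genus formula of Lemma~\ref{genus}. By Proposition~\ref{e1}, $e_1(R)=g(H)-g(H')$, where $H'$ is the blow-up of $H$; here $H'=\langle a,\,(a+1)-a,\,(a+d)-a\rangle=\langle a,1,d\rangle=\N$, so $g(H')=0$ and hence $e_1(R)=g(H)$. Thus everything comes down to computing $\sum_{w\in\Ap(H,a)}w$.

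First I would describe $\Ap(H,a)$. Non-symmetry forces $q+r+2-d\ge 1$ (otherwise the Herzog matrix in Theorem~\ref{mainthm} has a unit entry and $I$ is a complete intersection), so exactly as in the proof of Theorem~\ref{mainthm} there is a $k$-algebra isomorphism $R/(t^a)R\cong S/((x)+I)\cong k[[y,z]]/(y^d,\,y^rz^q,\,z^{q+1})$ carrying $t^{a+1}\mapsto y$ and $t^{a+d}\mapsto z$. A $k$-basis of the right-hand ring is the set of standard monomials $y^iz^j$ with $0\le i\le d-1$, $0\le j\le q$, and not both $i\ge r$ and $j=q$. On the other hand $\{t^w:w\in\Ap(H,a)\}$ is a $k$-basis of $R/(t^a)R$ by the definition of the Ap\'ery set, so each standard monomial $y^iz^j$ is the image of $t^{(a+1)i+(a+d)j}$; and if $(a+1)i+(a+d)j$ were not the least element of $H$ in its residue class mod $a$, then $t^{(a+1)i+(a+d)j}\in(t^a)R$, a contradiction. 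Hence
$$\Ap(H,a)=\bigl\{(a+1)i+(a+d)j\ :\ 0\le i\le d-1,\ 0\le j\le q,\ \text{not }(i\ge r\ \text{and}\ j=q)\bigr\}.$$

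Next I would sum over this set. Splitting the index set into the block $0\le i\le d-1$, $0\le j\le q-1$ together with the line $0\le i\le r-1$, $j=q$ gives
$$\sum_{w\in\Ap(H,a)}w=(a+1)\!\left(\frac{qd(d-1)}{2}+\frac{r(r-1)}{2}\right)+(a+d)\!\left(\frac{dq(q-1)}{2}+rq\right),$$
and then Lemma~\ref{genus} yields $g(H)=\tfrac1a\sum_{w\in\Ap(H,a)}w-\tfrac{a-1}{2}$. Substituting $a=dq+r$, the factor $a$ in the denominator cancels and the expression collapses to $\tfrac1{2d}\bigl(a^2+d(d-2)a-r(d-1)(d-r)\bigr)$. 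Since $r(d-1)(d-r)=(d-1)^2$ when $r=d-1$, the two formulas in the statement are the same assertion, split only to parallel the case division in Theorem~\ref{mainthm}.

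The only genuine work is the final polynomial simplification: one must push $a=dq+r$ through a rational expression and verify that $a$ divides the numerator. This is mechanical; I would keep everything in the variables $d,q,r$ until the last step, substituting $a$ back only at the end, and (in the range $a\ge d^2-3d$) cross-check the outcome against Proposition~\ref{e1first}.
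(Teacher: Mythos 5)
Your proposal is correct, and it reaches the result by a genuinely different route in its second half. You and the paper both start the same way: Proposition~\ref{e1} plus $H'=\langle a,1,d\rangle=\N$ gives $e_1(R)=g(H)$. But where the paper then feeds the Herzog parameters $\alpha,\beta,\gamma,\alpha',\beta',\gamma'$ and the Frobenius number from Theorem~\ref{mainthm} into the Nari--Numata--Watanabe genus formula $g(H)=\tfrac{1}{2}(\alpha\beta\gamma+\rmF(H)+1)$ (resp.\ its primed variant), you instead extract the full Ap\'ery set from the isomorphism $R/(t^a)\cong k[[y,z]]/(y^d,y^rz^q,z^{q+1})$ and sum it via Lemma~\ref{genus}. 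Your identification of $\Ap(H,a)$ with $\{(a+1)i+(a+d)j\}$ over the standard monomials is sound (each image must be a nonzero basis element of $R/(t^a)$, hence lies in the Ap\'ery set, and cardinality $a$ forces equality), and your observation that non-symmetry rules out $q+r+2-d=0$ is the right justification for reducing modulo $x$ as in the paper. The claimed ``mechanical'' collapse does work out: with $A=\tfrac{qd(d-1)}{2}+\tfrac{r(r-1)}{2}$ and $B=\tfrac{dq(q-1)}{2}+rq$ one finds $A+dB=\tfrac{a(a-1)}{2}$, so $g(H)=A+B$, which agrees with $\tfrac{1}{2d}\bigl(a^2+d(d-2)a-r(d-1)(d-r)\bigr)$ after substituting $a=dq+r$. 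What your route buys: it avoids any dependence on the Frobenius-number formulas of Theorem~\ref{mainthm} and on the \cite{NNW12} genus identity, and it treats the cases $r\le d-2$ and $r=d-1$ uniformly (the case split in the statement is, as you note, cosmetic). What the paper's route buys is brevity, since it reuses results already in hand. Your method is essentially the same as the paper's proof of Proposition~\ref{e1first}, transported to the hypothesis $q+r\ge d-2$.
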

\begin{proof} Note that since $H=\left<a,a+1,a+d\right>$, the blow-up $H'=\N$. Hence the genus of $H'$ is $g(H')=0$. It implies that $e_1(R)=g(H)$ by Proposition \ref{e1}. Now, thanks to Theorem \ref{mainthm}, the parameters in $(\sharp)$ are given by
	$$\alpha=d-1,\beta=d-r,\gamma=i,\alpha'=q+r+2-d,\beta'=r, \text{ and } \gamma'=1.$$
	\begin{itemize}
		\item If $r=d-1$ then $\beta'(a+1)=r(a+1)>(d-1)a=\alpha a$. 
		In this case by \cite{NNW12}, we obtain $$g(H)=\dfrac{\alpha\beta\gamma+\rmF(H)+1}{2}=\frac{1}{2d}(a^2+d(d-2)a-(d-1)^2).$$
		\item If $1\le r\le d-2$ then $\beta'(a+1)=r(a+1)<(d-1)a=\alpha a$. In this case, again by by \cite{NNW12}, we have $$g(H)=\dfrac{\alpha'\beta'\gamma'+\rmF(H)+1}{2}=\frac{1}{2d}(a^2+d(d-2)a-r(d-1)(d-r).$$
	\end{itemize}
	These implies the desired conclusion.
\end{proof}
\begin{rem} If $d\ge 5$ and $a=qd+r$ with $1\le q+r\le d-3$  we have not yet known the general formula for $e_1(R)$. We have the following in the case $q=r=1$. Let $H=\left<d+1,d+2,2d+1\right>$ with $d\ge 5$. Then, by Lemma \ref{remaining} we have
	\begin{itemize}
		\item if $d\equiv 0\pmod 3$ then and $e_1(R)=\dfrac{1}{3}d^2+\dfrac{1}{3}d.$ 
		\item if $d\equiv 1\pmod 3$ then $e_1(R)=\dfrac{1}{3}d^2+\dfrac{1}{3}d+\dfrac{1}{3}.$ 
		\item if $d\equiv 2\pmod 3$ then $e_1(R)=\dfrac{1}{3}d^2+\dfrac{1}{3}d.$ 
	\end{itemize} 
\end{rem}
\section*{Acknowledgments}
We are grateful to Duong Thi Huong, Nguyen Tuan Long and Vu Thi Minh Phuong for their valuable discussions.

\end{document}